\def\BibTeX{{\rm B\kern-.05em{\sc i\kern-.025em b}\kern-.08em
    T\kern-.1667em\lower.7ex\hbox{E}\kern-.125emX}}
\tikzset{join/.code=\tikzset{after node path={%
\ifx\tikzchainprevious\pgfutil@empty\else(\tikzchainprevious)%
edge[every join]#1(\tikzchaincurrent)\fi}}}
\tikzset{>=stealth',every on chain/.append style={join},
         every join/.style={->}}
\tikzstyle{labeled}=[execute at begin node=$\scriptstyle,
\newtheorem{Thm}{Theorem}
\newtheorem{Rmk}[Thm]{Remark}
\newtheorem{Exm}[Thm]{Example}
\newtheorem{Cor}[Thm]{Corollary}
\newtheorem{Lem}[Thm]{Lemma}
\newtheorem{Pro}[Thm]{Proposition}
\newtheorem{Prob}{Problem}
\theoremstyle{plain}
\theoremstyle{definition}
\title{Maximality of  logic without identity}
\author{Guillermo Badia }
\address{
School of Historical and Philosophical Inquiry\\
 University of Queensland\\ 
 St Lucia, QLD 4072, Brisbane, Australia\\ 
\texttt{guillebadia89@gmail.com} }
\author{Xavier Caicedo}
\address{Departamento de Matem\'aticas\\ Universidad de los Andes \\ Carrera 1 N. 18 A -70\\ Bogot\'a, Colombia\\ \texttt{xcaicedo@uniandes.edu.co}}
\author{Carles Noguera }
\address{Institute of Information Theory and Automation\\
          Czech Academy of Sciences,
          Pod vod\'{a}-renskou v\v{e}\v{z}\'{i}~4, 182~00~Prague, Czech Republic\\
           \texttt{noguera@utia.cas.cz}}
\begin{document}

\date{}

\maketitle

\begin{abstract}
Lindstr\"om's theorem obviously fails as a characterization of $\mathcal{L}_{\omega \omega}^{-} $, first-order logic without identity. In this note we provide a fix: we show that  $\mathcal{L}_{\omega \omega}^{-} $ is a \emph{maximal}  abstract logic satisfying a weak form of the isomorphism property (suitable for identity-free languages and studied in~\cite{Casa}), the L\"owenheim--Skolem property, and compactness. Furthermore, we show that compactness can be replaced by being recursively enumerable for validity under certain conditions. In the proofs  we use a form of strong upwards L\"owenheim--Skolem theorem not available in the framework with identity. 

\bigskip

\noindent{\bf Keywords:} abstract model theory, predicate logic without identity, Lindstr\"om theorem

\noindent{\bf Math Subject Classification:}  03B10, 03C95

\end{abstract}

\section{Introduction}

In the 1960s, Per Lindstr\"om~\cite{lin} showed that first-order logic is maximal (in terms of expressive power) among its extensions satisfying certain combinations of model-theoretic results. The best known of these combinations are: 
\[
\text{L\"owenheim--Skolem theorem} \ +\ \text{Compactness}
\]
 \[
 \text{L\"owenheim--Skolem theorem} \ +\ \text{Recursively enumerable set of validities}
 \]
This list is by no means exhaustive though (the reader can consult the encyclopaedic monograph~\cite{barfer} for a thorough treatment of this topic). Philosophically, these results have been interpreted as providing a case for first-order logic being the ``right" logic in contrast to higher-order, infinitary, or logics with generalized quantifiers, which can be argued to be more mathematical beasts (see~\cite{tharp, ke}).  An implicit assumption of Lindstr\"om's work is that identity ($=$) is a most basic notion and belongs in the base logic. 

The classical Lindstr\"om theorems clearly fail for first-order logic without identity ($\mathcal{L}_{\omega \omega}^{-}$) since first-order logic with identity ($\mathcal{L}_{\omega \omega}$) is a proper extension of $\mathcal{L}_{\omega \omega}^{-}$. In fact, there are continuum-many logics between the former and the latter satisfying the compactness and L\"owenheim--Skolem properties, and with recursively enumerable sets of validities (see Example~\ref{1} below).

In this article we aim at finding a way to amend Lindstr\"om's two central theorems so that they apply in the identity-free context.\footnote{Recall that any criteria for first-order axiomatizability in terms of closure of a class of structures under certain algebraic operations can be recast as a Lindstr\"om-style theorem. In this way, \cite[Thm.~3.4]{Casa} can be seen as a Lindstr\"om-style result already in the literature for logic without identity.} Our proofs make heavy use of a property that is not available in the context with identity, namely, that we have an unrestricted upwards L\"owenheim--Skolem theorem applying even to finite models.  We also observe other maximality results: a very simple one for the monadic version of the logic (i.e.\ restricted to vocabularies that only have unary predicates), $\mathcal{L}_{\omega \omega}^{1-}$, as well as results for both $\mathcal{L}_{\infty \omega}^{-}$ and  $\mathcal{L}_{\omega \omega}^{-} $ in terms of a suitable variant of the Karp property. A simple byproduct will be a preservation theorem characterizing the identity-free fragment of first-order logic (we essentially obtain~\cite[Cor.~2.10]{Casa} by a rather different method).

$\mathcal{L}_{\omega \omega}^{-}$ has attracted mathematical attention in other works such as \cite{kei} where the problem of categoricity of theories in that logic is studied. Moreover, the results in the present paper may provide new insight on the philosophical discussion whether $\mathcal{L}_{\omega \omega}^{-}$ is suitable as a contender for the title of the ``right logic" against $\mathcal{L}_{\omega \omega}$. After all, the logicality of the $=$ predicate is not obvious (cf.~\cite{fe2}). So, if the criteria were to involve only indisputably logical operators (so no more than what $\mathcal{L}_{\omega \omega}$ already involves), be reasonably expressive (quite a bit can be formalized already in $\mathcal{L}_{\omega \omega}^{-}$, including set theory), and satisfy a neat Lindstr\"om-style characterization,  $\mathcal{L}_{\omega \omega}^{-}$ would appear to be as good an option as any. However, we will not pursue those issues here. 

We use the notion of an \emph{abstract logic} from~\cite[Def.\ II.1.1.1]{barfer} which presents logics as model-theoretic languages~\cite{fe} (see also~\cite{bar1,flum, lin}), not as consequence relations or collections of theorems. Furthermore, we assume logics to have the basic closure properties (including for the Boolean connectives) from~\cite[Def.\ II.1.2.1]{barfer}, except that in the atom property we use $\mathcal{L}_{\omega \omega}^{-} $ as the base logic. For greater generality, we do not require the \emph{relativization property}. As usual, if $\mathcal{L}$ and $\mathcal{L}'$ are logics, we write $\mathcal{L} \leq \mathcal{L'}$ if, for any vocabulary $\tau$ and any formula $\f \in \mathcal{L}(\tau)$, we can find an equivalent formula $\f' \in \mathcal{L'}(\tau)$.

$\mathcal{L}_{\omega \omega}^{-} $ is, properly speaking, a fragment of $\mathcal{L}_{\omega \omega}$ containing the guarded fragments corresponding to basic modal logics. In this setting, the most fruitful approach has been to use bisimulations  as a modal analogue of potential isomorphisms in first-order logic~\cite{van}. In the present context all we require is the notion of weak (partial) isomorphism introduced in~\cite{Casa}, which is stronger than bisimulation.\footnote{This notion has incidentally proven useful in recent philosophical debates on the logicality of quantifiers and other operators~\cite{bo, Casa2}.}
 
Interestingly, the presence of identity can make a substantial difference regarding compactness. For example, monadic first-order logic with the Henkin quantifier, $\mathcal{L}^1_{\omega \omega}(Q^H)$,  is not compact and not contained in (monadic) first-order logic with identity for it can express the quantifier ``there are at least $\aleph_0$-many elements"; however, the identity-free fragment of the very same logic admits the effective elimination of the quantifier $Q^H$ and, hence, it is compact~\cite[Thm.\ 1.5]{kry}.\footnote{In contrast, the logic obtained from (monadic) identity-free first-order logic by adding the quantifier ``there are at least $\aleph_0$ elements" does not satisfy compactness~\cite[Thm.\ 8]{ya}.}  

The paper is arranged as follows: in \S\ref{pre} we start with the preliminary observation that there is a continuum of abstract logics between $\mathcal{L}_{\omega \omega}^{-} $ and $\mathcal{L}_{\omega \omega}$, and we recall the definitions of the properties of abstract logics employed in the paper, while referring to the literature for some particular technical notions. In \S \ref{main} we present our main new results, that is, Lindstr\"om-style characterizations of the identity-free first-order logic and its monadic fragment, together with instrumental observations regarding the logical relations of the involved properties and a useful form of upwards L\"owenheim--Skolem theorem. In \S \ref{ext} we examine a few interesting particular extensions of $\mathcal{L}_{\omega \omega}^{-} $ that help us understand the role of compactness and the L\"owenheim--Skolem property  in our characterizations. Finally, in \S \ref{con}, we collect some open problems that arise from this investigation.

\section{Preliminaries}\label{pre}

We begin this section by noting that there are continuum-many pairwise non-equivalent abstract logics between $\mathcal{L}_{\omega \omega}^{-} $ and $\mathcal{L}_{\omega \omega}$ (actually, already between their monadic fragments). 

\begin{Exm}\label{1}\em
Consider quantifiers $\exists^{\geq n}$ with semantics $\model{A} \models \Exi{^{\geq n}x} \f$ iff there are at least $n$ elements $a$ such that $\model{A} \models \f[a]$. 
For each non-empty $X \subsetneq \omega
\setminus \{0,1\}$, we can prove that the logic $\mathcal{L}_{\omega \omega}^{-} (\{\exists^{\geq n} \mid n\in X \})$ indeed lies properly between $\mathcal{L}_{\omega \omega}^{-} $ and $\mathcal{L}_{\omega \omega}$ in terms of expressive power and, moreover, there is a continuum of such intermediate abstract logics.

For distinct $X, Y \subseteq \omega
\setminus \{0,1\}$, the corresponding  logics $\mathcal{L}_{\omega \omega}^{-} (\{\exists^{\geq n} \mid n\in X \})$ and $ \mathcal{L}_{\omega \omega}^{-} (\{\exists^{\geq n} \mid n\in Y \})$ are also distinct. To see this, it suffices to focus our attention on a monadic vocabulary $\tau=\{P\}$. Suppose, without loss of generality, that we have an element $r \in X \setminus Y$. We abbreviate, for $n < m$, $\Exi{^{\geq n}x}\theta \wedge \lnot \Exi{^{\geq m}x}\theta $ as $\exists ^{\lbrack n,m)}x\,\theta $, and, for each $n$, $\exists ^{\geq n}x\,\theta $ as $\exists ^{\lbrack n,\infty )}x\,\theta$. Then, using results  from~\cite{Cai}, any sentence  $\varphi$ from the logic  $\mathcal{L}_{\omega \omega}^{-} (\{\exists^{\geq n} \mid n\in Y \})$ over the vocabulary $\tau$ is equivalent to a disjunction $\theta_1 \vee \dots \vee\theta_q$ involving only quantifiers from $\varphi$ where each $\theta _{i}$ is of one the three following forms:
\begin{itemize}
\item $\exists ^{\lbrack n_{i},m_{i})}x\, P(x)\wedge \exists ^{\lbrack
r_{i},s_{i})}x\,\lnot P(x)$
\item $\exists ^{\lbrack n_{i},m_{i})}x\, P(x)$
\item $\exists ^{\lbrack r_{i},s_{i})}x\,\lnot P(x)$
\end{itemize}
where $n_{i} \leq m_{i}$ and $r_{i} \leq s_{i}$ belong to $Y\cup \{1,\infty \}.$
Thus, $\varphi $ just describes an array of possible cardinalities for the
interpretations of $P$ and its complement, and clearly, $\Exi{^{\geq r}x}P(x)$ is equivalent to this disjunction if and only if $[r,\infty )=\bigcup 
_{n_i < m_i}[n_{i},m_{i})$, or $r=n_{i}$ for the least $n_{i},$ which is impossible
as $r\not\in Y\cup \{1\}.$
\end{Exm}

We use the definitions from~\cite{Casa}: $\model{A} \sim \model{B} $ means that there is a \emph{relativeness correspondence} between the structures~\cite[Def.\ 2.5]{Casa} (we prefer to call this a \emph{weak isomorphism});  $\model{A}\sim_p\model{B}$ means that there is a back-and-forth system $I$ of \emph{partial relativeness correspondences} between the models~\cite[Def.\ 4.7]{Casa} (we can say that these structures are \emph{partially weakly isomorphic}); and we denote by $\sim_n$ the finite approximation of  $\sim_p$~\cite[Def.\ 4.2]{Casa}. In the setting of first-order logic without identity, the relation $ \sim$ behaves like a weak notion of isomorphism~\cite{Casa}, which motivates the name for the second property defined below.\footnote{ Another place in the literature where this has been studied, albeit in less detail, is~\cite{ur}.}

The properties of abstract logics that we consider in this article are:

\begin{itemize}
\item \emph{Compactness property}:  for any vocabulary $\tau$, $\Phi \subseteq \mathcal{L}(\tau)$, if every finite subset of $\Phi $ has a model then $\Phi$ has a model.
\item \emph{L\"owenheim--Skolem property}: for any vocabulary $\tau$, and sentence $\f \in \mathcal{L}(\tau)$, $\f$ has a countable model if it has an infinite model.
\item \emph{Weak isomorphism property}: for any structures $\model{A}$ and $ \model{B}$, $\model{A} \sim \model{B}  $ only if $  \model{A} \equiv_{\mathcal{L}} \model{B}$.
\item \emph{Finite weak dependence property}: for any vocabulary $\tau$ and any $\varphi \in \mathcal{L}(\tau)$, there is a finite $\tau_0\subseteq \tau$ s.t.\ for any $\tau$-structures $\model{A}$ and\/ $\model{B}$, if\/ $\model{A}\upharpoonright  \tau_0\sim\model{B} \upharpoonright\tau_0$, then $\model{A}\models \varphi $ iff\/ $\model{B} \models \varphi$.
\item \emph{Karp$^-$ property}: for any structures $\model{A}$ and $ \model{B}$, $\model{A} \sim_p \model{B}$ only if $\model{A} \equiv_{\mathcal{L}} \model{B}$.
\item \emph{Boundedness property}: any sentence $\f(<, \dots)$ which for arbitrary large ordinal type $\alpha$ has a model where the interpretation of $<$ is an irreflexive and transitive binary relation containing a chain of order type $\alpha$  has a model where the interpretation of $<$ contains an infinite descending chain.
\end{itemize}
All these properties, with the exception of Karp$^-$ and weak isomorphism, hold in $\mathcal{L}_{\omega \omega}.$

Given a structure $\model{A}$, we denote by $\model{A}^*$ the \emph{reduction} of $\model{A}$~\cite[Def.\ 2.4]{Casa}, i.e., the quotient structure $\faktor{\model{A}}{\Omega(\model{A})}$ obtained from the Leibniz congruence relation.

\begin{Pro}[\cite{Casa}]\label{pro1} Let\/ $\model{A}$ and\/ $\model{B}$ be structures. Then:
\begin{itemize}
\item[(i)] If\/ $\model{A}$ and\/ $ \model{B}$ are countable, then $\model{A} \sim_p \model{B}$ iff\/ $\model{A} \sim \model{B}$.
\item[(ii)] $\model{A} \sim \model{B}$ iff\/ $\model{A}^* \cong \model{B}^*$.
\end{itemize}
\end{Pro}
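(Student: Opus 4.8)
The plan is to establish (ii) first and then derive (i) with the help of a back-and-forth construction. For (ii) I would begin from three routine closure facts about relativeness correspondences: the identity relation on the universe of $\model{A}$ is a relativeness correspondence from $\model{A}$ to $\model{A}$; if $R$ is a relativeness correspondence from $\model{A}$ to $\model{B}$, then $R^{-1}$ is one from $\model{B}$ to $\model{A}$; and if $R$ goes from $\model{A}$ to $\model{B}$ and $S$ from $\model{B}$ to $\model{C}$, then the composition $S\circ R$ goes from $\model{A}$ to $\model{C}$. Each is verified clause by clause (totality on both sides, two-way preservation of the atomic predicates, commutation with the interpretations of the function symbols and of the constants), and together they say that $\sim$ is an equivalence relation on structures. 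I would then observe that the graph $\{(a,[a]_{\Omega(\model{A})}) : a\in A\}$ of the canonical surjection $\model{A}\to\model{A}^*$ is a relativeness correspondence --- here the only nontrivial clause, two-way preservation of atomic predicates, is exactly the defining property of the quotient by the Leibniz congruence --- so that $\model{A}\sim\model{A}^*$ for every $\model{A}$, while $\model{A}^*$ is moreover reduced (its Leibniz congruence is the identity).

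Using symmetry and transitivity of $\sim$ together with $\model{A}\sim\model{A}^*$ and $\model{B}\sim\model{B}^*$, part (ii) reduces to the special case of \emph{reduced} structures: it suffices to show that for reduced $\model{C}$ and $\model{D}$ one has $\model{C}\sim\model{D}$ iff $\model{C}\cong\model{D}$. The direction from $\cong$ to $\sim$ is immediate, the graph of an isomorphism being a relativeness correspondence. For the converse, let $R$ be a relativeness correspondence from $\model{C}$ to $\model{D}$ and consider the relation $\theta=\{(c,c') : c\,R\,d \text{ and } c'\,R\,d \text{ for some } d\}$ on the universe of $\model{C}$. One checks that $\theta$ is reflexive (by totality of $R$ on the left), symmetric, compatible with every function symbol, and compatible with every atomic predicate; a chain-padding argument then shows that its transitive closure $\bar\theta$ is still a congruence and still compatible with every predicate, whence $\bar\theta\subseteq\Omega(\model{C})$ by maximality of the Leibniz congruence --- so $\bar\theta$ is the identity, $\model{C}$ being reduced. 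Thus $c\,R\,d$ and $c'\,R\,d$ force $c=c'$; running the same argument with $R^{-1}$ and $\model{D}$ shows that $R$ is single-valued as well. Being total on both sides, $R$ is then a bijection, and since it preserves atomic predicates in both directions, commutes with the function symbols and fixes the constants, it is an isomorphism; hence $\model{C}\cong\model{D}$.

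For (i), the implication $\model{A}\sim\model{B}\Rightarrow\model{A}\sim_p\model{B}$ needs no countability: given a relativeness correspondence $R$, the family of its restrictions to pairs of substructures on which it remains two-sidedly total is a back-and-forth system of partial relativeness correspondences, the forth and back properties being witnessed by the two-sided totality of $R$ and the fact that $R$ commutes with the function symbols (which lets one close small pieces under the operations). For $\model{A}\sim_p\model{B}\Rightarrow\model{A}\sim\model{B}$ one argues as in the classical countable back-and-forth: enumerate the universes as $A=\{a_n\}_{n<\omega}$ and $B=\{b_n\}_{n<\omega}$, fix a member of the given back-and-forth system $I$, and build an increasing $\omega$-chain inside $I$ which at step $2n+1$ pulls $a_n$ into the domain (forth) and at step $2n+2$ pulls $b_n$ into the range (back). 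The union $R$ of the chain is two-sidedly total by construction and respects the atomic predicates, function symbols and constants, each of these requirements being determined by finitely much data already present in some member of the chain; thus $R$ witnesses $\model{A}\sim\model{B}$.

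The step I expect to be the main obstacle is the reduced-case argument inside (ii): one must handle the Leibniz congruence through its characterization as the largest congruence compatible with the predicates, and in particular check that passing to the transitive closure of $\theta$ destroys neither compatibility with the predicates nor compatibility with the function symbols --- the standard but slightly delicate padding-of-chains bookkeeping, made a little heavier here by the function symbols in the vocabulary. By comparison, the two halves of (i) are routine once the clauses in the definition of a (partial) relativeness correspondence for a vocabulary with function symbols are written out carefully.
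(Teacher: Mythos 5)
There is nothing in the paper to compare your argument against: Proposition~\ref{pro1} is stated as a quotation from Casanovas--Dellunde--Jansana \cite{Casa} and the paper gives no proof of it, so your proposal can only be judged on its own merits (and against the standard proofs in \cite{Casa}, which it essentially reproduces). On those terms it is correct. The decomposition of (ii) through the facts that $\sim$ is an equivalence relation, that $\model{A}\sim\model{A}^*$ via the graph of the canonical surjection, and that on reduced structures $\sim$ collapses to $\cong$, is exactly the intended route, and both halves of (i) are the standard back-and-forth arguments. Two small remarks. First, the step you single out as the main obstacle is avoidable: the Leibniz congruence $\Omega(\model{C})$ is (equivalently to your ``largest compatible congruence'' characterization) the relation of indiscernibility with respect to atomic formulas with parameters, and your relation $\theta$ lands \emph{directly} inside it --- given $c\,R\,d$ and $c'\,R\,d$, totality of $R$ lets you lift any tuple of parameters to $\model{D}$ and transfer each atomic fact through $(c,d)$ and back through $(c',d)$ --- so no transitive closure and no padding of chains is needed; $\theta\subseteq\Omega(\model{C})=\mathrm{id}$ at once. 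Second, in the union step of (i) you should be explicit about function symbols: if the members of the back-and-forth system are only required to admit \emph{extensions} realizing $f(\overline{a})$, $f(\overline{b})$ (rather than being closed under the operations), then the union of your chain need not commute with the functions unless you interleave countably many closure steps into the enumeration; this is routine bookkeeping but it is precisely the clause your phrase ``determined by finitely much data already present in some member of the chain'' glosses over.
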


Thanks to Proposition~\ref{pro1}, the \emph{weak isomorphism property} can be equivalently formulated as follows: for any structures $\model{A}$ and $ \model{B}$, $\model{A}^* \cong \model{B}^*$ only if $\model{A} \equiv_{\mathcal{L}} \model{B}$. 
Observe that $\model{A}$ and $\model{A}^*$ are relatives: $\model{A}^*\cong \model{A}^{**}$, so by Proposition~\ref{pro1}, $\model{A} \sim \model{A}^*$.

\section{Maximality results}\label{main}
We start this section by showing a form of \emph{upwards L\"owenheim--Skolem theorem}, which will be heavily used in the arguments below:

\begin{Lem}\label{use}
Let $\mathcal{L}$ be an abstract logic with the weak isomorphism property. Then,  a theory $T \subseteq \mathcal{L}(\tau)$ has a model $\model{A}$ of cardinality $\lambda$ only if, for any $\kappa>\lambda$,  there is a model $\model{B}$ of T with cardinality $\kappa$ and a surjective strict homomorphism (in the sense of~\cite[Def.\ 2.1]{Casa}), and hence a weak isomorphism, from $\model{B}$ onto $\model{A}$.
\end{Lem}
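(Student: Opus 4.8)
The plan is to build $\model{B}$ by "blowing up" one element of $\model{A}$ into a large set of indistinguishable copies, so that the quotient by the Leibniz congruence returns something isomorphic to $\model{A}^*$; then the weak isomorphism property (in the reformulation via Proposition~\ref{pro1}) will guarantee that $\model{B}$ and $\model{A}$ satisfy the same $\mathcal{L}$-sentences, in particular all of $T$.

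First I would fix the model $\model{A}\models T$ of cardinality $\lambda$ and pick any element $a_0\in A$. Given $\kappa>\lambda$, I would let $\model{B}$ have universe $B=(A\setminus\{a_0\})\cup(\{a_0\}\times\kappa)$, so that $|B|=\kappa$, and define the map $h\colon B\to A$ sending each $b\in A\setminus\{a_0\}$ to itself and each pair $(a_0,\xi)$ to $a_0$. The interpretations in $\model{B}$ are defined to make $h$ a strict surjective homomorphism in the sense of~\cite[Def.\ 2.1]{Casa}: for a relation symbol $R$, put $R^{\model{B}}=\{\bar b : h(\bar b)\in R^{\model{A}}\}$ (pulling back along $h$ componentwise), and for a function symbol $f$ one defines $f^{\model{B}}$ by making an arbitrary but coherent choice of preimages — e.g.\ fix a section $s\colon A\to B$ with $h\circ s=\mathrm{id}_A$ and set $f^{\model{B}}(\bar b)=s(f^{\model{A}}(h(\bar b)))$; one checks $h(f^{\model{B}}(\bar b))=f^{\model{A}}(h(\bar b))$, so $h$ is a homomorphism, and it is strict and surjective by construction. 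The slightly delicate point is to verify that this indeed matches the notion of strict homomorphism from~\cite{Casa} and that a strict surjective homomorphism is in particular a weak isomorphism (a relativeness correspondence), i.e.\ that $\model{B}\sim\model{A}$; this should follow from the general fact, recorded around Proposition~\ref{pro1}, that strict surjective homomorphisms induce isomorphisms of reductions.

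Next I would argue that $\model{B}^*\cong\model{A}^*$. Since $h$ is a strict surjective homomorphism, the kernel of $h$ refines the Leibniz congruence $\Omega(\model{B})$, and conversely the only elements of $B$ that are Leibniz-equivalent but not already equal in $\model{A}$ are the copies $(a_0,\xi)$, which are genuinely indistinguishable in $\model{B}$ because every atomic formula's truth value depends only on their $h$-image. Hence $\Omega(\model{B})$ collapses exactly the fibres of $h$ together with whatever $\Omega(\model{A})$ already collapses, giving $\model{B}^*\cong\model{A}^*$ via the map induced by $h$. By Proposition~\ref{pro1}(ii) this yields $\model{B}\sim\model{A}$, and then the weak isomorphism property gives $\model{B}\equiv_{\mathcal{L}}\model{A}$; in particular $\model{B}\models T$. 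The homomorphism $h\colon\model{B}\to\model{A}$ is the desired surjective strict homomorphism, and it is a weak isomorphism by the same Proposition.

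The main obstacle I anticipate is purely bookkeeping around the definition of strict homomorphism from~\cite{Casa}: one must make sure the pulled-back interpretations on $\model{B}$ really do make $h$ \emph{strict} (not merely a homomorphism), handle function symbols and constants coherently via a section, and confirm that "strict surjective homomorphism" is exactly what is needed to conclude $\model{B}^*\cong\model{A}^*$ and hence $\model{B}\sim\model{A}$. Once the correspondence between $h$ and the reduction map is pinned down, the rest is immediate from Proposition~\ref{pro1} and the weak isomorphism property. Note in particular that this argument works even when $\model{A}$ is finite, which is the feature unavailable in the presence of identity.
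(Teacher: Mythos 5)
Your construction is exactly the one the paper relies on: the published proof simply cites the classical upward L\"owenheim--Skolem argument without identity (Bridge, Ackermann), which proceeds by duplicating an element into $\kappa$ many indistinguishable copies and observing that the collapsing map is a surjective strict homomorphism; you have spelled out that same blow-up explicitly, including the correct handling of function symbols via a section and the passage through $\model{B}\sim\model{A}$ and the weak isomorphism property to get $\model{B}\models T$. The argument is correct and matches the paper's approach.
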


\begin{proof} It follows by inspection of the proof of~\cite[Lem.\ 2.24]{bridge} or \cite[Ch. IV, \S 1]{ack} (which is only formulated for relational languages but can be easily generalized to languages with function symbols). For any structure $\model{A}$ of cardinality $\lambda$, in that proof one builds a model $\model{B}$ of size $\kappa$ and a mapping $B \longrightarrow A$ which is, in fact, a surjective strict homomorphism.  
\end{proof}

\begin{Rmk} \emph{Lemma~\ref{use} allows us to see that a plethora of logics do not have the weak isomorphism property, e.g.\ the logics in Example~\ref{1}. Interestingly, the usual Lindstr\"om quantifiers destroy the property, in particular in the logics $\mathcal{L}_{\omega \omega}^{-}(Q_\alpha)$. However, as we will see in Example~\ref{ex2},  all of these logics have counterparts which do have the weak isomorphism property.  On the other hand, as we will see below, the Henkin quantifier $Q^H$ is a curious case of a natural  Lindstr\"om quantifier that has the weak isomorphism property.}
\end{Rmk}

Now we can provide an analogue of (1) from~\cite[Thm.\ III.1.1.1]{barfer}.
%

\begin{Lem}\label{fdp}
Let $\mathcal{L}$ be an abstract logic such that $\mathcal{L}_{\omega \omega}^{-} \leq \mathcal{L}$. If $\mathcal{L}$ has the compactness and  weak isomorphism properties, then it also has the \emph{finite weak dependence property}.
\end{Lem}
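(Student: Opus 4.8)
The plan is to mimic the classical proof that compactness plus the (weak) isomorphism property yields finite dependence, adapting it to the identity-free setting via the reduction functor and Proposition~\ref{pro1}. Fix a vocabulary $\tau$ and $\varphi\in\mathcal{L}(\tau)$; without loss of generality $\tau$ is finite or even countable by the basic occurrence/renaming closure properties of abstract logics (only finitely many symbols ``occur'' in $\varphi$ in any reasonable formulation, but to be safe one argues as below). Suppose toward a contradiction that no finite $\tau_0\subseteq\tau$ works as a finite weak dependence set. Then for every finite $\tau_0\subseteq\tau$ there are $\tau$-structures $\model{A}_{\tau_0}$ and $\model{B}_{\tau_0}$ with $\model{A}_{\tau_0}\upharpoonright\tau_0\sim\model{B}_{\tau_0}\upharpoonright\tau_0$ but $\model{A}_{\tau_0}\models\varphi$ and $\model{B}_{\tau_0}\not\models\varphi$.

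Next I would set up a single first-order-without-identity theory in an enlarged vocabulary that ``glues'' a pair of such structures together by a relativeness correspondence on the $\tau_0$-reducts. Concretely, introduce two disjoint copies $\tau^1,\tau^2$ of $\tau$, a fresh binary predicate $R$ meant to code a relativeness correspondence, and possibly unary predicates $U_1,U_2$ marking the two domains. Let $\Sigma$ be the $\mathcal{L}_{\omega\omega}^-$-theory (hence in $\mathcal{L}$, since $\mathcal{L}_{\omega\omega}^-\le\mathcal{L}$) asserting: $U_1,U_2$ partition the universe; $R\subseteq U_1\times U_2$ is ``total and surjective'' and is a relativeness correspondence between the $\tau^1$-structure on $U_1$ and the $\tau^2$-structure on $U_2$ for each symbol of $\tau_0$ (this is a finite conjunction once $\tau_0$ is finite — this is exactly the point where finiteness of $\tau_0$ is used); together with a relativized copy $\varphi^1$ of $\varphi$ (relativized to $U_1$ via $\tau^1$) and $\lnot\varphi^2$. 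For each finite $\tau_0$ the structure obtained by amalgamating $\model{A}_{\tau_0}$ and $\model{B}_{\tau_0}$ with the given correspondence as $R$ is a model of the corresponding $\Sigma_{\tau_0}$. Now take the union $\Sigma_\infty$ over all finite $\tau_0\subseteq\tau$ of the ``$R$ is a relativeness correspondence for the symbol $P$'' axioms (one sentence per $P\in\tau$), together with $\varphi^1$ and $\lnot\varphi^2$: every finite subset mentions only finitely many symbols of $\tau$, hence is satisfiable by the previous sentence, so by compactness $\Sigma_\infty$ has a model. In that model the $\tau^1$-part $\model{A}$ and the $\tau^2$-part $\model{B}$ (restricting to $U_1,U_2$) satisfy $\model{A}\sim\model{B}$ — the interpretation of $R$ witnesses a relativeness correspondence for all of $\tau$ — while $\model{A}\models\varphi$ and $\model{B}\not\models\varphi$. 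This contradicts the weak isomorphism property, which gives $\model{A}\equiv_{\mathcal{L}}\model{B}$.

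The step I expect to be the main obstacle — and the one needing the most care rather than cleverness — is the bookkeeping that turns ``$R$ is a relativeness correspondence on the $\tau_0$-reducts'' into a genuine $\mathcal{L}_{\omega\omega}^-$-sentence, and checking that a relativeness correspondence that works symbol-by-symbol for every $P\in\tau$ really is a relativeness correspondence for the whole structure. This is essentially Definition~2.5 of~\cite{Casa} unwound predicate by predicate: totality, surjectivity, and the forth/back clauses matching atomic $P$-facts. One must also be slightly careful about function symbols (use the generalization indicated in the proof of Lemma~\ref{use}, treating graphs of functions) and about the relativization of $\varphi$ to $U_1$: since we do not assume the relativization property for $\mathcal{L}$, the trick is that we only need to relativize the \emph{base} formula and we can instead work with two-sorted structures or simply take $U_1,U_2$ to be the whole universe in two separate models linked by $R$ living on a common superstructure — i.e.\ present the amalgam as a single structure whose universe is the disjoint union, which is harmless because $\varphi$ itself need not be relativized if we keep $\model{A}$ and $\model{B}$ as reducts to $\tau^1$ and $\tau^2$ on their respective domains. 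A clean way to avoid relativization entirely is to phrase everything with the two models side by side and only use $\mathcal{L}_{\omega\omega}^-$ sentences about $R$; I would adopt that presentation. Finally, invoking the weak isomorphism property at the end is legitimate precisely because $\model{A}\sim\model{B}$ in the sense of~\cite[Def.~2.5]{Casa}, not merely partially, so no appeal to Proposition~\ref{pro1}(i) or countability is needed.
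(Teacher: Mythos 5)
Your overall strategy --- coding a relativeness correspondence by a fresh binary predicate $R$ over $\tau\cup\tau'$, letting compactness transfer the correspondence from finite subvocabularies to all of $\tau$, and closing with the weak isomorphism property --- matches the paper's. The genuine gap is in the amalgamation step, precisely the point your last paragraph tries to wave away. A relativeness correspondence can hold between structures of \emph{different} cardinalities (this is the whole point of the identity-free setting), so the pairs $\mathfrak{A}_{\tau_0},\mathfrak{B}_{\tau_0}$ you produce cannot in general be placed on a common domain by mere identification. Your proposed fix --- take the disjoint union, mark the two halves with $U_1,U_2$, and ``keep $\mathfrak{A}$ and $\mathfrak{B}$ as reducts to $\tau^1$ and $\tau^2$ on their respective domains'' --- does not work: a reduct never shrinks the universe, so the $\tau^1$-reduct of the disjoint union is a structure on $A\sqcup B$, not on $A$, and whether it satisfies $\varphi$ has nothing to do with whether $\mathfrak{A}$ does (consider any $\varphi$ sensitive to elements outside $U_1$). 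The only way to make the $\tau^1$-part literally be a $\tau$-structure satisfying $\varphi$, without the relativization property (which the paper explicitly does not assume), is to first equalize the cardinalities. That is exactly what the paper does via Lemma~\ref{use}: replace the smaller structure $\mathfrak{A}$ by a structure $\mathfrak{C}$ of power $|B|$ admitting a surjective strict homomorphism onto $\mathfrak{A}$, so that $\mathfrak{C}\sim\mathfrak{A}$ over all of $\tau$ (hence $\mathfrak{C}\models\varphi$ by the weak isomorphism property) and $\mathfrak{C}$, $\mathfrak{B}'$ can share one domain, making the reducts of $\mathfrak{C}+\mathfrak{B}'$ genuinely equal to $\mathfrak{C}$ and $\mathfrak{B}'$. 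Your proof never invokes Lemma~\ref{use} and has no substitute for it.

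The same problem recurs at the end of your argument: after applying compactness you must extract from the model of $\Sigma_\infty$ two $\tau$-structures that disagree on $\varphi$, and ``restricting to $U_1,U_2$'' is again a relativization, not a reduct. For the record, the paper also runs the compactness step in the other logical direction (it derives $\Phi(\tau_0,R)\models\varphi\leftrightarrow\varphi'$ for some finite $\tau_0$ from $\Phi(\tau,R)\models\varphi\leftrightarrow\varphi'$, rather than arguing by contradiction from finite satisfiability), but that difference is cosmetic; the missing ingredient is the upwards L\"owenheim--Skolem argument, which is the one tool the identity-free setting provides and the classical proof lacks.
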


\begin{proof}
Given a vocabulary $\tau ,$ let $\tau ^{\prime }$ be a disjoint copy and
consider the theory $\Phi (\tau ,R)$:
\begin{eqnarray*}
&\{\forall x_{1}\dots x_{n}\forall y_{1}\dots y_{n}[\bigwedge
_{i}Rx_{i}y_{i}\rightarrow (\theta (x_{1}\dots)\leftrightarrow \theta ^{\prime
}(y_{1}\dots))]\mid \theta \in \tau \text{, }\theta ^{\prime }\in \tau ^{\prime }%
\text{ its copy} \}\\
&\cup \{ \All{ x_{1}, \dots, x_{n}}\,\All {y_{1}, \dots,y_{n}} [\bigwedge _{i}Rx_{i}y_{i} \rightarrow Rt(x_1\dots)t'(y_{1} \dots)]   \mid  t \ \text{a term of} \  \tau\}
\\ &\cup \  \{``R\text{ and }R^{-1}\text{
are surjective''}\}
\end{eqnarray*}%
For any $\varphi \in \mathcal{L(\tau )}$, let $\varphi ^{\prime }$ denote its
renaming in the type $\tau ^{\prime }$. Then, $\Phi (\tau ,R)\models \varphi
\leftrightarrow \varphi ^{\prime }$ by closure of the logic $\mathcal{L}$
under weak isomorphisms, and by compactness 
\begin{equation*}
\Phi (\tau _{0},R)\models \varphi \leftrightarrow \varphi ^{\prime }
\end{equation*}%
for some finite $\tau _{0}\subseteq \tau $.

Assume now that $\mathfrak{A}\upharpoonright \tau _{0}\sim \mathfrak{B} \upharpoonright \tau _{0}$ by some $\tau _{0}$-weak isomorphism $r\subseteq (A\cup B)^{2},$ and $|A|<|B|.$ By Lemma~\ref{use}, there is a $\mathfrak{C}$ of power $
|B|$ and a surjective strict homomorphism $h:C\rightarrow A$. Thus, $r\circ h$ is a $\tau _{0}$-weak isomorphism from $\mathfrak{C}$ onto $\mathfrak{B}.$ Renaming the last structure as $\mathfrak{B}^{\prime }$ with $\tau ^{\prime}$ we may put $\mathfrak{C}$ and $\mathfrak{B}^{\prime }$ together in a structure $\mathfrak{C}+\mathfrak{B}^{\prime }$ sharing the same domain. Then, $\tuple{\mathfrak{C}+\mathfrak{B}^{\prime},r\circ h}\models \Phi (\tau _{0},R),$ and hence, $\tuple{\mathfrak{C}+\mathfrak{B}^{\prime}, r\circ h}\models \varphi \leftrightarrow \varphi ^{\prime }$ this implies: $
\mathfrak{C}\models \varphi \ $\ iff $\mathfrak{B}\models \varphi .$ But $
\mathfrak{A}\sim \mathfrak{C}$ with respect to full $\tau ,$ then $\mathfrak{A}\models \varphi \ $\ iff $\mathfrak{B}\models \varphi.$ If $|A|=|B|$, we apply the construction directly with $\mathfrak{A}$ and $\mathfrak{B}$. 
\end{proof}

We are now ready to provide the main result of this paper:

\begin{Thm}\label{thm1} Let $\mathcal{L}$ be an abstract logic such that $\mathcal{L}_{\omega \omega}^{-} \leq \mathcal{L}$. If $\mathcal{L}$ has the weak isomorphism, compactness, and L\"owenheim--Skolem properties, then $\mathcal{L} \leq  \mathcal{L}_{\omega \omega}^{-}$.
\end{Thm}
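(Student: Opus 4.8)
The plan is to transpose the classical Lindström argument to the identity-free setting, replacing partial isomorphisms by weak partial isomorphisms and using Proposition~\ref{pro1} at the very end to convert a back-and-forth system into an honest isomorphism of reductions. Suppose, toward a contradiction, that $\mathcal{L}\not\leq\mathcal{L}_{\omega\omega}^-$; fix a vocabulary $\tau$ and $\varphi\in\mathcal{L}(\tau)$ with no equivalent in $\mathcal{L}_{\omega\omega}^-(\tau)$. By Lemma~\ref{fdp}, $\mathcal{L}$ has the finite weak dependence property, so passing to the finite subvocabulary it supplies we may assume $\tau$ is finite. Over a finite vocabulary there are only finitely many $\sim_m$-classes and each is definable by an $\mathcal{L}_{\omega\omega}^-(\tau)$-sentence (an identity-free Fra\"iss\'e theorem; cf.~\cite{Cai, Casa}); hence an $\mathcal{L}$-sentence over $\tau$ is equivalent to an $\mathcal{L}_{\omega\omega}^-(\tau)$-sentence exactly when it is invariant under $\sim_m$ for some $m$. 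Consequently, for each $m\in\omega$ there are $\tau$-structures $\mathfrak{A}_m$ and $\mathfrak{B}_m$ with $\mathfrak{A}_m\sim_m\mathfrak{B}_m$, $\mathfrak{A}_m\models\varphi$ and $\mathfrak{B}_m\not\models\varphi$.

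Next I would encode these finite back-and-forth approximations. Take two disjoint renamed copies $\tau_1,\tau_2$ of $\tau$ and let $\tau^+$ add to $\tau_1\cup\tau_2$ a binary symbol $<$ together with symbols encoding a set $W$ of game positions as objects (a level function into the $<$-part and incidence relations recording, coordinate by coordinate along $<$, which element of the $\tau_1$-reduct is matched with which element of the $\tau_2$-reduct). For each $m$, build a $\tau^+$-structure $\mathfrak{M}_m$ whose $<$-part is, up to the Leibniz congruence, a discrete linear order with endpoints of length $m$, whose $\tau_i$-reducts carry $\mathfrak{A}_m$ resp.\ $\mathfrak{B}_m$, and in which $W$ encodes a winning strategy: the empty position lies at the bottom level, every coded position is a weak partial isomorphism between the two reducts, and the forth and back clauses move one level up along the immediate-successor class. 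Then $\mathfrak{M}_m$ satisfies a fixed finite set $\Delta\subseteq\mathcal{L}(\tau^+)$ — the order and strategy axioms, all expressible in $\mathcal{L}_{\omega\omega}^-$, together with the renamings $\varphi_1$ and $\neg\varphi_2$ of $\varphi$ — plus the sentence $\lambda_m$ saying that $<$ has a chain of length $\geq m$. By compactness, $\Delta\cup\{\lambda_m:m\in\omega\}$ has a model $\mathfrak{M}$; its $<$-part is then infinite, so the immediate-successor chain issuing from its bottom element is infinite, and the positions sitting over that chain form a genuine back-and-forth system of weak partial isomorphisms. Thus $\mathfrak{M}\upharpoonright\tau_1\sim_p\mathfrak{M}\upharpoonright\tau_2$ while $\mathfrak{M}\models\varphi_1\wedge\neg\varphi_2$, i.e.\ there are (possibly uncountable) $\tau$-structures $\mathfrak{A}^*\sim_p\mathfrak{B}^*$ with $\mathfrak{A}^*\models\varphi$ and $\mathfrak{B}^*\not\models\varphi$.

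To make the two structures countable I would repackage the information as a single sentence and then invoke L\"owenheim--Skolem. Let $\rho$ be the $\mathcal{L}(\tau^+)$-sentence asserting that $<$ is a discrete strict weak order with a least class and no greatest class — which forces every model to be infinite — that $W$ encodes a back-and-forth system of weak partial isomorphisms between the $\tau_1$- and $\tau_2$-reducts, indexed upward along $<$ from the empty position at the bottom, and that $\varphi_1\wedge\neg\varphi_2$ holds. Building a model of $\rho$ from $\mathfrak{A}^*\sim_p\mathfrak{B}^*$ using an order of type $\omega$ shows $\rho$ is satisfiable, so by the L\"owenheim--Skolem property $\rho$ has a countable — hence countably infinite — model $\mathfrak{N}$. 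Reading off $\mathfrak{A}_0:=\mathfrak{N}\upharpoonright\tau_1$, $\mathfrak{B}_0:=\mathfrak{N}\upharpoonright\tau_2$ (after undoing the renaming) and the back-and-forth system coded in $\mathfrak{N}$, we get countable $\tau$-structures with $\mathfrak{A}_0\sim_p\mathfrak{B}_0$, $\mathfrak{A}_0\models\varphi$, $\mathfrak{B}_0\not\models\varphi$. By Proposition~\ref{pro1}(i) countability upgrades $\sim_p$ to $\sim$, whence the weak isomorphism property of $\mathcal{L}$ gives $\mathfrak{A}_0\equiv_{\mathcal{L}}\mathfrak{B}_0$ — contradicting $\mathfrak{A}_0\models\varphi$ and $\mathfrak{B}_0\not\models\varphi$, and completing the proof.

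The step I expect to be the main obstacle is that, since we do not assume the relativization property, we cannot relativize $\varphi$ to a unary predicate marking the ``$\mathfrak{A}$-side'' inside the coding structures, as one does classically. The remedy I would use is the observation that if $\mathfrak{C}$ is a $\tau$-structure with domain $C$ and $M\supseteq C$, then extending $\mathfrak{C}$ to $M$ by declaring every new element to behave exactly like a fixed $c_0\in C$ yields a $\tau$-structure with the same reduction as $\mathfrak{C}$; by Proposition~\ref{pro1}(ii) and the weak isomorphism property it satisfies the same $\mathcal{L}$-sentences as $\mathfrak{C}$. Applying this to $\mathfrak{A}_m$ and $\mathfrak{B}_m$ (and again when constructing the model of $\rho$) lets the $\tau_1$- and $\tau_2$-reducts of the coding structure live on its entire domain and still decide $\varphi$ correctly, so the two ``sides'' of the game are simply those two reducts. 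One further point to keep in mind throughout is that, in the absence of identity, ``$<$ is a linear order'' must be read up to the Leibniz congruence (a discrete strict weak order), which is harmless because reductions are only taken at the very end.
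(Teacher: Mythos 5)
Your proposal is correct and follows essentially the same route as the paper's proof: reduce to a finite vocabulary via Lemma~\ref{fdp}, use the identity-free Fra\"iss\'e machinery of~\cite{Casa} to obtain $\sim_m$-equivalent structures separated by $\varphi$, code the finite back-and-forth systems into a single two-sorted-by-renaming structure, apply compactness to obtain $\sim_p$, and finish with the L\"owenheim--Skolem property plus Proposition~\ref{pro1}(i) to upgrade $\sim_p$ to $\sim$ and contradict the weak isomorphism property. The remaining differences are cosmetic: you index the game levels upward from the bottom of a discrete order forced to be infinite, where the paper forces an infinite descending chain below $c_0$ with fresh constants, and your padding-by-a-fixed-element device for avoiding relativization is exactly the surjective-strict-homomorphism construction behind Lemma~\ref{use}, which the paper uses to put both structures on a common domain.
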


\begin{proof}
Assume $\varphi\in\mathcal{L}(\tau)\setminus\mathcal{L}_{\omega\omega}^{-}(\tau)$ and $\varphi$ depends on a finite vocabulary $\tau_{0}\subseteq\tau$ (by compactness and Lemma~\ref{fdp}). Notice that there are only finitely many sentences of rank $\leq n$ in $\mathcal{L}_{\omega\omega}^{-}(\tau_{0})$ \cite[Lem.\ 4.4]{Casa}, thus the relation
$\mathfrak{A}\upharpoonright\tau_{0}\equiv_{n}^{-}\mathfrak{B}\upharpoonright
\tau_{0}$ has finitely many equivalence classes of structures of type $\tau$
and the equivalence class of a structure $\mathfrak{A}$ coincides with
$\mathrm{Mod}_{\tau}(\Theta_{\mathfrak{A}})$ for the sentence
\[
\Theta_{\mathfrak{A}}=\bigwedge\{\theta\text{\ of\ rank}\leq n\mid
\mathfrak{A\models}\theta\}.
\]
Therefore, $\mathrm{Mod}_{\tau}(\varphi)$ cannot be a union of these classes (it would be equivalent to a finite disjunction of sentences in $\mathcal{L}_{\omega\omega}^{-}(\tau_{0}))$ and it must cut some equivalence class in two non-emtpy pieces. In other words, there are $\tau$-structures $\mathfrak{A}_{n}$ and $\mathfrak{B}_{n}$ such that
\[
\mathfrak{A}_{n}\upharpoonright\tau_{0}\equiv_{n}^{-}\mathfrak{B}_{n}\upharpoonright\tau_{0}\emph{,\ }\text{\ }\mathfrak{A}_{n}\models \varphi,\mathfrak{B}_{n}\models\lnot\varphi.
\]
By Lemma~\ref{use}, we may assume that $\mathfrak{A}_{n}\ $and $\mathfrak{B}_{n}$ have the same infinite power and share the same domain $A_{n}.$

By~\cite[Lem.\ 4.4 and Prop.\ 4.5]{Casa}, $\mathfrak{A}_{n}\upharpoonright
\tau_{0}\sim_{n}\mathfrak{B}_{n}\upharpoonright\tau_{0};$ that is, there are
sets $I_{0},\ldots,I_{n}$ of weak \emph{finite} $\tau_{0}$-partial
isomorphisms from $\mathfrak{A}_{n}$ to $\mathfrak{B}_{n}$ such that
$I_{n}\not =\emptyset$ and for all $p\in I_{j+1}$ $a,\in A_{n},b\in B_{n}$
there are $q,q^{\prime}\in I_{j}$ such that $q,q^{\prime}\supseteq r$ and
$a\in\mathit{dom}(q),$ $b\in\mathit{rg}(q^{\prime}),$ and further
extension properties guaranteeing that constants and functions are eventually
preserved. The set of all finite weak $\tau_{0}$-partial isomorphisms has
the same power as $D,$ so we may enumerate them as $\{R_{p}\mid p\in A_{n}\};$
moreover, we may assume $\{0,\ldots,n\}\subseteq A_{n}.$ Then, renaming
$\mathfrak{B}_{n}$ as $\mathfrak{B}_{n}^{\prime}$ on the vocabulary
$\tau^{\prime},$ as in the proof of Lemma~\ref{fdp}, and defining in $A_{n}$:

\medskip

$<^{\ast}=$ usual order of $\{0,\ldots,n\}$

$c_{0}^{\ast}=n$

$\tuple{j,p}\in I^{\ast}\Leftrightarrow R_{p}\in I_{j}$

$\tuple{p,x,y}\in G^{\ast}\Leftrightarrow \tuple{x,y}\in R_{p},$\medskip

\noindent the structure $\tuple{\mathfrak{A}_{n}+\mathfrak{B}_{n},<^{\ast},c_{0}^{\ast},I^{\ast},G^{\ast}}$ satisfies the following finite theory $\Psi$ in the vocabulary
\[
\tau_{0}\cup\tau_{0}^{\prime}\cup\{<,c_{0},I,G\},
\]
where $c_{0}$ is a constant, $<$ and $I$ are binary relations, and $G$ is a
ternary relation (all fresh symbols; moreover, for each formula $\psi
\in\mathcal{L}(\tau_{0})$, we denote by $\psi^{\prime}$ its renaming in the
vocabulary $\tau_{0}^{\prime}$):\medskip

\begin{tabular}
[c]{ll}%
1. & $\varphi$, $\lnot\varphi^{\prime}$\\
2. & $\exists p$ $Ic_{0}p$\\
3. & $\forall p\,\overrightarrow{x}\overrightarrow{y}(%
{\textstyle\bigwedge\nolimits_{i}}
Gpx_{i}y_{i}\rightarrow(\chi(\overrightarrow{x})\leftrightarrow\chi^{\prime
}(\overrightarrow{y}))),$\\
& \ \ \ \ for each relation symbol$\chi\in\tau_{0}$ of arity $|\overrightarrow
{x}|$\\
4. & $\forall uvp\overrightarrow{x}\overrightarrow{y}(u<v\wedge Ivp\wedge%
{\textstyle\bigwedge\nolimits_{i}}
Gpx_{i}y_{i}\rightarrow\exists q[Iuq\wedge Gqf(\overrightarrow{x})f^{\prime
}(\overrightarrow{y})$\\
& $\wedge\forall zw(Gpzw\rightarrow Gqzw)])$,\\
& \ \ \ \ \ for each function symbol $f\in\tau_{0}$ of arity $|\overrightarrow
{x}|$\\
5. & $\forall uvp(u<v\wedge Ivp\rightarrow\exists q[Iuq\wedge Gqcc^{\prime
}\wedge\forall zw(Gpxw\rightarrow Gqzw)])$,\\
& \ \ \ \ \ for each constant symbol $c\in\tau_{0}$\\
6. & $\forall uvp(u<v\wedge Ivp\rightarrow\forall x\exists qq^{\prime
}yy^{\prime}[Iuq\wedge Iuq^{\prime}\wedge Gqxy\wedge Gq^{\prime}y^{\prime}x$\\
&
$ \wedge
\forall zw(Gpxw\rightarrow Gqzw\wedge Gq^{\prime}zw)]$
\end{tabular}
\medskip

\noindent The second sentence states that $I_{n}$ is non-empty. Sentences 3--6 describe a sequence $I_{0},\ldots,I_{n}$ of sets of weak $\tau_{0}$-partial isomorphisms in the sense  of \cite[Def.\ 4.2]{Casa}.


As the above holds for any $n$, we have models for any finite part of the infinite theory with additional constants $c_{1},c_{2},...$:
\[
\Psi(\tau_{0},<,I,G)\cup\{\varphi,\lnot\varphi^{\prime}\}\cup\{c_{j+1}<c_{j}\mid j\in\omega\}.
\]
By compactness, we have a model
$\tuple{\mathfrak{C},<^{\mathfrak{C}},I^{\mathfrak{C}
},G^{\mathfrak{C}},\tuple{c_{j}^\mathfrak{C}}_{j\in\omega}}$ of this theory.
By the axioms, each $p\in C$ encodes a weak $\tau_{0}$-partial isomorphism
$R_{p}=\{\tuple{x,y}\in A^{2}\mid\tuple{p,x,y}\in G^{\mathfrak{C}}\}\ $between
$\mathfrak{C}\upharpoonright\tau_{0}\ $and $\mathfrak{C}\upharpoonright
\tau_{0}^{\prime},$ and the sequence
\[
I_{j}=\{R_{p}\in C\mid\tuple{p,c_{j}^{\mathfrak{C}}}\in I^{\mathfrak{C}}\}, j=0,1,\ldots
\]
has the back-and-forth extension property with respect to increasing
subindexes: if $R_{p}\in I_{j}$ and $c\in C$, then there is a $R_{q}\in
I_{j+1}$ such that $c\in\mathit{dom}(R_{p}),$ etc. Hence, $K^{\mathfrak{C}}=\bigcup_{j}I_{j}$ has the unrestricted extension property and becomes a Karp
system of weak $\tau_{0}$-isomorphisms. This is expressible by the finite
theory $\Phi(\tau_{0},K,G)$ which results of changing the back-and-forth
axioms of $\Psi(\tau_{0},<,c_{0},I,G)$ to
\[
\forall px(Kp\rightarrow\exists q\,q^{\prime}\,\exists yy^{\prime}[Kq\wedge
Kq^{\prime}\wedge Gqxy\wedge Gq^{\prime}y^{\prime}x\wedge\forall z\forall
w(Gpzw\rightarrow Gqzw\wedge Gq^{\prime}zw)]
\]
In sum, $\tuple{\mathfrak{C},K^{\mathfrak{C}},G^{\mathfrak{C}}}\models
\Phi(\tau_{0},K,G)\cup\{\varphi,\lnot\varphi\}$ which means
\[
\mathfrak{C}\upharpoonright\tau_{0}\sim_{p}\mathfrak{C}\upharpoonright\tau
_{0}^{\prime},\text{ }\mathfrak{C}\upharpoonright\tau\models\varphi
,\mathfrak{C}\upharpoonright\tau^{\prime}\models\lnot\varphi^{\prime}.
\]
By the L\"{o}wenheim--Skolem property, we may assume that $\mathfrak{C}$ is
countable. Hence, by Proposition~\ref{pro1}, $\mathfrak{C\upharpoonright\tau
}_{0}\sim\mathfrak{C}\upharpoonright\tau_{0}^{\prime}$ and thus
$\mathfrak{C\mathfrak{\upharpoonright\tau}\models\varphi\Longleftrightarrow
C\mathfrak{\upharpoonright\tau}}^{\prime}\mathfrak{\models\varphi}$ by the
choice of $\tau,$ a contradiction.
\end{proof}

\begin{Rmk} \emph{The Karp$^-$ property may replace the  L\"{o}wenheim--Skolem hypothesis in the above theorem because the proof yields before the last step a model of $\Phi(\tau_{0},K,G)\cup\{\varphi,\lnot\varphi\}$ for any finite $\tau_{0}\subseteq\tau,$ which by an additional use of compactness gives a model $\tuple{\mathfrak{C},K^{\mathfrak{C}},G^{\mathfrak{C}}}$ of $\Phi(\tau,K,G)\cup\{\varphi,\lnot\varphi\};$ that is, the weak isomorphisms encoded by $K,G$ are weak $\tau$-isomorphisms, thus we have 
\[
\mathfrak{C\upharpoonright\tau\sim}_{p}\text{ }\mathfrak{C\upharpoonright\tau
}^{\prime},\text{ }\mathfrak{C\mathfrak{\upharpoonright\tau}\models\varphi
,}\text{ }\mathfrak{C\mathfrak{\upharpoonright\tau}}^{\prime}\mathfrak{\models
\lnot\varphi}^{\prime}
\]
which, by the Karp$^-$ property, gives directly the contradiction
$\mathfrak{C\mathfrak{\upharpoonright\tau}\models\varphi\Longleftrightarrow
\mathfrak{C\mathfrak{\upharpoonright\tau}}^{\prime}\models\varphi^{\prime}}$.}
\end{Rmk}

\begin{Rmk}\emph{
Note that the boundedness property for $\mathcal{L}_{\infty \omega}^{-}$ is essentially a corollary of the classical one from~\cite[Thm.~1.8]{bar2}. Then, if we use our approach in encoding weak partial isomorphisms in Theorem~\ref{thm1} and working with the Karp$^-$ property, it is straightforward to modify the argument from~\cite[Thm.~III.3.1]{barfer} to show that  $\mathcal{L}_{\infty \omega}^{-}$  is maximal among its extensions in having the  boundedness, and Karp$^-$ properties. In fact, all we need from the boundedness property is that it will give us a model where $<$ is not well founded.}
\end{Rmk}

Comparing the proof of Theorem~\ref{thm1} with that of its classical counterpart with identity, the reader should note that our approach makes a substantial use of the strong upwards L\"owenheim--Skolem theorem given by Lemma~\ref{use}.  This allows us to deal with cardinality situations that in the classical context are dealt with the expressive power of identity. 

One may wonder whether we can obtain a Lindstr\"om-style characterization for identity-free monadic first-order logic, $\mathcal{L}_{\omega \omega}^{1-}$, analogous to Tharp's result~\cite[Thm.\ 1]{tharp} for {\em monadic} first-order logic. The answer is yes and the result does not require, surprisingly, any form of the L\"{o}wenheim--Skolem theorem (not even the other two properties if we assume the finite weak dependence property; see Remark~\ref{rem:Finite-Weak-Dependence}).

\begin{Thm}\label{mon}
Let $\mathcal{L}$ be a monadic logic such that $\mathcal{L}_{\omega\omega}^{1-}\leq\mathcal{L}.$ If $\mathcal{L}$ satisfies the compactness and weak isomorphism properties, then $\mathcal{L}\leq\mathcal{L}_{\omega\omega}^{1-}$.\end{Thm}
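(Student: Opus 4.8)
The plan is to mimic the structure of the proof of Theorem~\ref{thm1}, but to exploit the very restrictive structure of monadic vocabularies to bypass the use of the L\"owenheim--Skolem property entirely. First I would reduce, exactly as before, to the case of a sentence $\varphi\in\mathcal{L}(\tau)$ depending on a finite monadic vocabulary $\tau_0\subseteq\tau$; this is legitimate because compactness plus the weak isomorphism property give the finite weak dependence property via Lemma~\ref{fdp}. So without loss of generality $\tau_0=\{P_1,\dots,P_k\}$ consists of finitely many unary predicates.

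The key observation is that for a finite monadic vocabulary the relation $\sim$ (weak isomorphism) is extremely coarse: two $\tau_0$-structures $\mathfrak A$ and $\mathfrak B$ have $\mathfrak A\sim\mathfrak B$ iff, for each of the $2^k$ ``colours'' (Boolean combinations of the $P_i$'s determining an atomic type of an element), the corresponding colour classes in $\mathfrak A$ and $\mathfrak B$ are either both empty or both non-empty. Indeed, the reduction $\mathfrak A^*$ collapses each non-empty colour class to a single point, so $\mathfrak A^*$ is determined up to isomorphism by the set of realized colours, and then Proposition~\ref{pro1}(ii) gives the claim. Consequently there are only finitely many $\sim$-classes of $\tau_0$-structures — at most $2^{2^k}$ of them — and each such class is, by the analogue of \cite[Lem.\ 4.4]{Casa} for monadic logic (or directly: ``colour $i$ is realized'' and ``colour $i$ is not realized'' are expressible in $\mathcal{L}_{\omega\omega}^{1-}(\tau_0)$), definable by a single sentence $\Theta_{\mathfrak A}\in\mathcal{L}_{\omega\omega}^{1-}(\tau_0)$. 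Since $\mathcal{L}$ has the weak isomorphism property, $\mathrm{Mod}_\tau(\varphi)$ is a union of such $\sim$-classes, hence equivalent to the finite disjunction of the corresponding $\Theta_{\mathfrak A}$'s, which is a sentence of $\mathcal{L}_{\omega\omega}^{1-}(\tau_0)\subseteq\mathcal{L}_{\omega\omega}^{1-}(\tau)$. This shows $\mathcal{L}\leq\mathcal{L}_{\omega\omega}^{1-}$.

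The one point that needs care — and which I expect to be the main obstacle — is justifying that ``colour $i$ is realized'' is genuinely expressible in identity-free monadic first-order logic: the sentence $\exists x\,\beta_i(x)$, where $\beta_i$ is the relevant conjunction of literals from $\{P_j(x),\lnot P_j(x)\}$, does the job and uses no identity, so this is fine; the subtler issue is that without identity one \emph{cannot} say ``there are at least two elements of colour $i$'', but we do not need to — the whole content of $\sim$ on monadic structures is exactly the realized-colour pattern, so distinguishing cardinalities of colour classes is neither possible nor required. I would also remark (cf.\ Remark~\ref{rem:Finite-Weak-Dependence}) that the argument in fact only uses the finite weak dependence property together with the weak isomorphism property; compactness enters solely through Lemma~\ref{fdp} to secure finite weak dependence, and no form of L\"owenheim--Skolem or upwards L\"owenheim--Skolem (Lemma~\ref{use}) is invoked at all, which is the promised contrast with Theorem~\ref{thm1}.
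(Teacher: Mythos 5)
Your colour analysis for a \emph{finite} monadic vocabulary is correct: the Leibniz reduction collapses each realized colour class to a point, so by Proposition~\ref{pro1}(ii) the $\sim$-class of a $\tau_0$-structure is exactly its set of realized colours, each class is defined by a single sentence of $\mathcal{L}_{\omega\omega}^{1-}(\tau_0)$, and the weak isomorphism property then forces $\varphi$ to be a finite disjunction of these. This is essentially the content of Remark~\ref{rem:Finite-Weak-Dependence}. The gap is in your very first step, the reduction to a finite $\tau_0\subseteq\tau$. You justify it by Lemma~\ref{fdp}, but that lemma is stated for logics with $\mathcal{L}_{\omega\omega}^{-}\leq\mathcal{L}$, and its proof introduces a fresh \emph{binary} relation symbol $R$ together with a theory asserting that $R$ is a weak isomorphism between the $\tau$-part and the $\tau'$-part. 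A monadic logic has no binary relation symbols in its vocabularies, so that argument is unavailable here; and the obvious monadic surrogate (a theory saying that the two reducts realize the same finite colours) only yields $\equiv_1^-$, i.e.\ agreement on finitely-consistent types, which for an \emph{infinite} monadic vocabulary does not imply $\sim$ (agreement on realized complete colours), so one cannot invoke the weak isomorphism property to conclude $\Phi\models\varphi\leftrightarrow\varphi'$. Note that the paper presents finite weak dependence as an \emph{additional} hypothesis in Remark~\ref{rem:Finite-Weak-Dependence}, precisely because it is not derived from compactness and weak isomorphism in the monadic setting.

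The paper's proof takes a route that never reduces to a finite subvocabulary. It uses compactness once to obtain $\mathfrak{A}\equiv_1^-\mathfrak{B}$ over the full (possibly infinite) $\tau$ with $\mathfrak{A}\models\varphi$ and $\mathfrak{B}\models\lnot\varphi$ (plus Lemma~\ref{use} to put them on a common domain), and a second time, with auxiliary \emph{unary} predicates $P_\delta,P_\delta'$ naming the complete types $t_\delta$, to realize every finitely-consistent type and omit every inconsistent one in both halves simultaneously; the resulting $\widehat{\mathfrak{A}}$ and $\widehat{\mathfrak{B}}$ then realize exactly the same complete colours, hence $\widehat{\mathfrak{A}}\sim\widehat{\mathfrak{B}}$, contradicting the weak isomorphism property. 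To salvage your cleaner colour-counting argument you would have to either restrict the theorem to finite vocabularies, assume the finite weak dependence property outright (as in Remark~\ref{rem:Finite-Weak-Dependence}), or give a genuinely monadic proof of finite weak dependence; as written, the reduction step is a real gap.
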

 
\begin{proof}
Assume $\varphi\in\mathcal{L}(\tau)\setminus
\mathcal{L}_{\omega\omega}^{-}(\tau)$, $\tau=\{P_{i} \mid i\in I\}$. As in the
proof of Theorem~\ref{thm1}, we have for each finite $\tau_{0}\subseteq\tau$:
\[
\mathfrak{A}\upharpoonright\tau_{0}\equiv_{1}^{-}\mathfrak{B}\upharpoonright
\tau_{0}\emph{,\ }\text{\ }\mathfrak{A}\models\varphi,\text{ }\models
\lnot\varphi.
\]
and by compactness
\[
\mathfrak{A}\equiv_{1}^{-}\mathfrak{B}\emph{,\ }\text{\ }\mathfrak{A}\models\varphi,\mathfrak{B}\models\lnot\varphi.
\]
By Lemma 3, we may assume $\mathfrak{A}\ $and $\mathfrak{B}$ share the same
domain $A.$

Each map $\delta \colon I\rightarrow\{0,1\}$ determines a type
\[
t_{\delta}(x)=\{P_{i}(x) \mid \delta(i)=1\}\cup\{\lnot P_{i}(x) \mid \delta(i)=0\}.
\]
A type $t_{\delta}$ is consistent with $\mathfrak{A}$ if for each finite
$J\subseteq I,$ $\mathfrak{A}\models\exists x\wedge(t_{\delta}(x)\upharpoonright J)$. Clearly, $\mathfrak{A}$ and $\mathfrak{B}$ above have
the same consistent types and, if $t_{\delta}$ is not consistent with
$\mathfrak{A}$, there is a witness $\eta_{\delta}$ of the form $\lnot\exists
x\wedge(t_{\delta\upharpoonright J_{\delta}}(x)),$ $J_{\delta}\subseteq
_{fin}I,$ true in both $\mathfrak{A}\ $and $\mathfrak{B}$.

Consider the following theory on the vocabulary $\tau\cup\tau^{\prime}\cup\{P_{\delta
},P_{\delta}^{\prime}\mid\delta\in2^{I}\}:$

\begin{tabular}{ll}
$-$ & $\varphi,\lnot\varphi^{\prime}$\medskip\\

 & For each $t_{\delta}$ consistent with $\mathfrak{A}$ and each finite $J\subseteq I$:\\

$-$ &  $\exists xP_{\delta}(x),$ $\forall x(P_{\delta}(x)\rightarrow\wedge(t_{\delta\upharpoonright J}(x))$\\

$-$ & $\exists xP_{\delta}^{\prime}(x)$, $\forall x(P_{\delta}^{\prime}(x)\rightarrow\wedge(t_{\delta\upharpoonright J}^{\prime
}(x))$.\medskip\\

&  For each $t_{\delta}$ inconsistent with $\mathfrak{A}$:\\

$-$ &  $\eta_{\delta},$ $\eta_{\delta}^{\prime}.$

\end{tabular}

Then, $\mathfrak{C=A}+\mathfrak{B}^{\prime}$ may be expanded to a
model $\tuple{\mathfrak{A}+\mathfrak{B}^{\prime}, P_{\delta}^{\mathfrak{C}
},P_{\delta}^{\prime\mathfrak{C}}}_{\delta\in 2^{J}}$ of each finite
part\ $\Sigma$ of this theory taking $P_{\delta}^{\mathfrak{C}}=\{a\in
A\mid \mathfrak{A\models}t_{\delta\upharpoonright J}(a)\}$ and $P_{\delta}^{\prime\mathfrak{C}}=\{b\in A\mid \mathfrak{B}^{\prime}\mathfrak{\models}t_{\delta\upharpoonright J}^{\prime}(a)\}$ for $J=\{i \mid P_{i}$ or
$P_{i}^{\prime}$ occur in $\Sigma\}$.

By compactness, there is a model $\tuple{\widehat{\mathfrak{A}}+\widehat
{\mathfrak{B}}^{\prime},P_{\delta}^{\mathfrak{A}},P_{\delta}^{\prime
\mathfrak{B}^{\prime}}}_{\delta\in2^{J}}$ of the full theory. Then,
$\widehat{\mathfrak{A}}$ and $\widehat{\mathfrak{B}}$ realize exactly the same
types $t_{\delta}$ (those originally consistent)\ and thus $\widehat
{\mathfrak{A}}\sim\widehat{\mathfrak{B}},$ defining $aRb$ iff $a$ and $b$
realize the same type $t_{\delta}.$ This contradicts the weak isomorphism
property since $\widehat{\mathfrak{A}}\models\varphi$ and $\widehat{\mathfrak{B}}\models\lnot\varphi.$
\end{proof}

\begin{Rmk}\label{rem:Finite-Weak-Dependence}
\emph{If $\mathcal{L}$ has the finite weak dependence property, then the compactness and weak isomorphism properties are not needed in the previous theorem. Indeed, if $\varphi$ depends on finite $\tau_{0}\subseteq\tau$, the first step of the proof $\mathfrak{A}\upharpoonright
\tau_{0}\equiv_{1}^{-}\mathfrak{B}\upharpoonright\tau_{0},$ $\mathfrak{A}\models\varphi,$ $\mathfrak{B}\models\lnot\varphi,$ yields already a contradiction, since $\mathfrak{A}\ $and $\mathfrak{B}$ realize trivially
the same $t_{\delta\text{ }}$types based on $\tau_{0},$ and thus
$\mathfrak{A}\upharpoonright\tau_{0}\sim\mathfrak{B}\upharpoonright\tau_{0}.$}
\end{Rmk}

Since  $\mathcal{L}_{\omega \omega}$ and $\mathcal{L}^1_{\omega \omega}$ have both the compactness and the L\"owenheim--Skolem  properties, then we can obtain the following preservation result from Theorems~\ref{thm1} and~\ref{mon} (which is essentially~\cite[Cor.\ 2.10]{Casa}\footnote{Note that~\cite[Cor.\ 2.10]{Casa} is equivalent to our formulation due to~\cite[Pro.\ 2.6]{Casa}.} proved by a rather different method): 

\begin{Cor} $\mathcal{L}_{\omega \omega}^{-}$ (resp.\ $\mathcal{L}_{\omega \omega}^{1-}$) is the fragment of $\mathcal{L}_{\omega \omega}$ (resp.\ $\mathcal{L}^1_{\omega \omega}$) preserved under weak isomorphisms.
\end{Cor}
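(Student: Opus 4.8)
The plan is to derive both inclusions of the claimed equality; the easy one is the weak isomorphism property of $\mathcal{L}_{\omega\omega}^{-}$, and the substantial one will be extracted from Theorems~\ref{thm1} and~\ref{mon}. For the easy inclusion, every $\varphi\in\mathcal{L}_{\omega\omega}^{-}(\tau)$ is preserved under weak isomorphisms: this is the weak isomorphism property of $\mathcal{L}_{\omega\omega}^{-}$ (see~\cite{Casa}), which one may also check directly by induction on $\varphi$ --- the atomic and function/constant clauses are exactly the conditions defining a relativeness correspondence, the Boolean steps are routine, and the existential step uses that a relativeness correspondence is surjective in both coordinates. Hence $\mathcal{L}_{\omega\omega}^{-}$ lies inside the fragment of $\mathcal{L}_{\omega\omega}$ preserved under weak isomorphisms, and likewise $\mathcal{L}_{\omega\omega}^{1-}$ inside the corresponding fragment of $\mathcal{L}^1_{\omega\omega}$.

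For the converse, fix $\varphi\in\mathcal{L}_{\omega\omega}(\tau)$ preserved under weak isomorphisms and let $\mathcal{L}(\varphi)$ be the least abstract logic with $\mathcal{L}_{\omega\omega}^{-}\leq\mathcal{L}(\varphi)$ and $\varphi\in\mathcal{L}(\varphi)(\tau)$ --- concretely, adjoin $\varphi$ as a fresh $0$-ary predicate and close off under the Boolean connectives and quantification; such a least logic exists, e.g.\ as the intersection of all abstract logics lying between $\mathcal{L}_{\omega\omega}^{-}$ and $\mathcal{L}_{\omega\omega}$ and containing $\varphi$. I would then verify three points. First, $\mathcal{L}(\varphi)\leq\mathcal{L}_{\omega\omega}$: since $\varphi$ is itself first-order and the admissible operations send first-order formulas to first-order formulas, every formula of $\mathcal{L}(\varphi)$ is equivalent to a first-order one; consequently $\mathcal{L}(\varphi)$ inherits the compactness and L\"owenheim--Skolem properties from $\mathcal{L}_{\omega\omega}$, both of which descend to sublogics (replace formulas by $\mathcal{L}_{\omega\omega}$-equivalents, apply the property there, and read the resulting model back). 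Second, $\mathcal{L}(\varphi)$ has the weak isomorphism property: by induction on the generation of formulas, every formula of $\mathcal{L}(\varphi)$ is preserved under weak isomorphisms --- the identity-free atomic cases by the weak isomorphism property of $\mathcal{L}_{\omega\omega}^{-}$, the new atom $\varphi$ by hypothesis, and the Boolean and quantifier steps verbatim as in the proof that $\mathcal{L}_{\omega\omega}^{-}$ enjoys that property. Third, $\mathcal{L}_{\omega\omega}^{-}\leq\mathcal{L}(\varphi)$ holds by construction.

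Theorem~\ref{thm1} now applies to $\mathcal{L}(\varphi)$ and yields $\mathcal{L}(\varphi)\leq\mathcal{L}_{\omega\omega}^{-}$, so $\varphi$ is equivalent to some $\varphi'\in\mathcal{L}_{\omega\omega}^{-}(\tau)$, which is the desired converse inclusion. (Equivalently, one may adjoin at once all first-order sentences preserved under weak isomorphisms and get the whole fragment in a single invocation of Theorem~\ref{thm1}.) The monadic assertion is obtained the same way: given $\varphi\in\mathcal{L}^1_{\omega\omega}(\tau)$ over a monadic $\tau$, form the monadic logic $\mathcal{L}^1(\varphi)$ generated by $\mathcal{L}_{\omega\omega}^{1-}$ and $\varphi$, observe that $\mathcal{L}^1(\varphi)\leq\mathcal{L}^1_{\omega\omega}$ (hence it is compact) and that it has the weak isomorphism property by the same induction, and invoke Theorem~\ref{mon} to conclude $\mathcal{L}^1(\varphi)\leq\mathcal{L}_{\omega\omega}^{1-}$; note that here the L\"owenheim--Skolem property is not needed at all.

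The one point requiring care is the bookkeeping behind the first two verifications: one has to make sure that the generated object really is an abstract logic in the sense fixed in the paper, and that ``preservation under weak isomorphisms'' genuinely lifts from the generators to all generated formulas. But since $\mathcal{L}_{\omega\omega}^{-}$ already satisfies the weak isomorphism property, this is literally the same induction with a single extra $0$-ary atomic case, settled by the hypothesis on $\varphi$; the existential step --- choosing a partner for a fresh witness using surjectivity of the relativeness correspondence in the relevant coordinate --- is the only place where anything substantive happens. All the real content lives in Theorems~\ref{thm1} and~\ref{mon}.
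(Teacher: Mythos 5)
Your proposal is correct and follows essentially the same route as the paper: the paper derives the corollary by observing that the relevant fragment of $\mathcal{L}_{\omega\omega}$ (resp.\ $\mathcal{L}^1_{\omega\omega}$) inherits compactness and the L\"owenheim--Skolem property from first-order logic and has the weak isomorphism property by definition, so Theorems~\ref{thm1} and~\ref{mon} apply. You merely spell out the bookkeeping (forming the generated abstract logic and verifying the hypotheses), which the paper leaves implicit.
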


We proceed now to obtain an analogue of the second Lindstr\"om theorem from~\cite{lin}. First, we need the following lemma:

\begin{Lem}\label{rec}\footnote{This lemma is an analogue of  \cite[Lem.\ III.1.1.2]{barfer} for $\mathcal{L}_{\omega \omega}$, but simpler.  In particular, we do not need to use the L\"owenheim--Skolem property.}
Let $\mathcal{L}$ be an abstract logic such that $\mathcal{L}_{\omega\omega}^-\leq \mathcal{L}$ satisfying the finite weak dependence and weak isomorphism properties. If $\mathcal{L}$ extends properly $\mathcal{L}_{\omega\omega}^-$,  then there exist a finite vocabulary $\sigma$ containing at least one unary relation $U$ and, for each finite vocabulary
$\rho\supseteq\sigma$, a sentence $\theta\in \mathcal{L}(\rho)$ such that

\begin{enumerate}
\item for each $n\geq1$, there is a model\/ $\model{A}\models \theta$ with $|U^{\model{A}}|=n$, and 
\item if\/ $\model{A}\models\theta$ and $A$ is countably infinite, then $U^{\mathfrak{A}^{\ast}}$ is finite and non-empty.
\end{enumerate}
\end{Lem}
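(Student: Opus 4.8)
The plan is to build the sentence $\theta$ that "defines its own finiteness" using the extra expressive power that $\mathcal{L}$ has over $\mathcal{L}_{\omega\omega}^-$. Since $\mathcal{L}$ properly extends $\mathcal{L}_{\omega\omega}^-$ and has the finite weak dependence and weak isomorphism properties, Theorem~\ref{thm1} (more precisely, the contrapositive of Lemma~\ref{fdp} together with Theorem~\ref{thm1}) tells us that $\mathcal{L}$ must fail either compactness or the L\"owenheim--Skolem property. One first argues that in fact $\mathcal{L}$ must fail the L\"owenheim--Skolem property: if $\mathcal{L}$ were compact, then by Theorem~\ref{thm1} it would collapse to $\mathcal{L}_{\omega\omega}^-$, contradicting proper extension; hence $\mathcal{L}$ is not compact. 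But then one can extract from a non-compact situation a sentence witnessing a failure of L\"owenheim--Skolem by the standard trick (adapted to the identity-free setting via the weak isomorphism property) — or, more directly, mimic the classical argument in \cite[Lem.\ III.1.1.2]{barfer}: from the failure of L\"owenheim--Skolem produce a sentence with arbitrarily large finite "core" but no countable model with infinite core.

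The concrete construction I would carry out: take $\psi\in\mathcal{L}(\tau)\setminus\mathcal{L}_{\omega\omega}^-(\tau)$ witnessing that the extension is proper, with $\tau$ finite by finite weak dependence. Running the first part of the proof of Theorem~\ref{thm1} (or of Theorem~\ref{mon}) gives, for each $n$, structures with $\mathfrak{A}_n\upharpoonright\tau_0\equiv_n^-\mathfrak{B}_n\upharpoonright\tau_0$, $\mathfrak{A}_n\models\psi$, $\mathfrak{B}_n\models\lnot\psi$, and by Lemma~\ref{use} one may take them of the same infinite power on a shared domain. Since $\mathcal{L}$ is not compact, there is a sentence $\chi$ over some finite vocabulary that has arbitrarily large finite models but no countably infinite model; encode in $\sigma$ a unary predicate $U$ meant to pick out a copy of the domain of such a finite structure. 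The sentence $\theta\in\mathcal{L}(\rho)$ is then the conjunction of $\chi$ (relativized to $U$, using only $\mathcal{L}_{\omega\omega}^-$-machinery since $\mathcal{L}_{\omega\omega}^-\leq\mathcal{L}$) with axioms asserting that $U$ is "closed" under the relevant structure. Item (1) follows because $\chi$ has finite models of every size; for item (2), if $\mathfrak{A}\models\theta$ with $A$ countably infinite, then $U^{\mathfrak{A}^\ast}$ cannot be countably infinite — otherwise the reduction of the substructure on $U$ would be a countably infinite model of $\chi$, contradicting the failure of L\"owenheim--Skolem for $\chi$ — and it is non-empty by construction, so it is finite.

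The delicate point, and the main obstacle, is passing from "$\mathcal{L}$ is not compact" to an explicit sentence $\chi$ whose finite spectrum is cofinal in $\omega$ but which has no countably infinite model; in the classical setting this is routine, but here one must be careful that the reduction operation $(-)^\ast$ interacts correctly with relativization and with the failure of L\"owenheim--Skolem, because $\mathcal{L}_{\omega\omega}^-$ cannot see cardinalities directly — it only sees them through $(-)^\ast$. Concretely, one needs that "$U$ has infinitely many $\equiv^-$-inequivalent elements" is not expressible in a way that would force a countable model, and that $\mathfrak{A}^\ast$ restricted to the image of $U$ really is $(\mathfrak{A}\upharpoonright U)^\ast$ up to isomorphism; this requires checking that the Leibniz congruence of the substructure on $U$ agrees with the restriction of the Leibniz congruence of $\mathfrak{A}$, which holds provided $U$ is chosen as (a union of) $\equiv^-$-classes, so I would build that requirement into $\theta$. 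Once these compatibility facts are in place, items (1) and (2) are immediate; the bulk of the work is the careful bookkeeping of the encoding, which parallels \cite[Lem.\ III.1.1.2]{barfer} but is simpler because, as the footnote notes, no L\"owenheim--Skolem property of $\mathcal{L}$ itself is invoked — only the failure of it that proper extension forces.
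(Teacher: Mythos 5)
There is a genuine gap, and it is fatal to the proposed route. Your plan hinges on extracting, from the failure of compactness (or of L\"owenheim--Skolem), a sentence $\chi$ ``that has arbitrarily large finite models but no countably infinite model.'' No such sentence can exist in a logic with the weak isomorphism property: by Lemma~\ref{use}, any sentence of $\mathcal{L}$ with a model of finite cardinality $\lambda$ has models of every cardinality $\kappa>\lambda$, in particular countably infinite ones. (This is exactly the ``strong upward L\"owenheim--Skolem'' phenomenon the paper emphasizes as the distinctive feature of the identity-free setting.) The correct analogue of your $\chi$ would be a sentence whose models have arbitrarily large finite \emph{reductions} $U^{\mathfrak{A}^{\ast}}$ but never a countably infinite one --- and that is precisely the statement of Lemma~\ref{rec} itself, so assuming such a $\chi$ is handed to you by ``the standard trick'' is circular. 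Moreover, the preliminary step is also a non sequitur: Theorem~\ref{thm1} only yields $\lnot(\text{compactness} \wedge \text{LS})$ for a proper extension with the weak isomorphism property; it does not let you conclude that compactness in particular fails (Example~\ref{ex3} is a compact proper extension with the weak isomorphism property), nor that LS in particular fails, and your text asserts both at different points.

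The paper's proof takes a different and non-circular route, which is the one you should follow: it does not pass through non-compactness at all. Starting from a witness $\varphi\in\mathcal{L}(\tau)\setminus\mathcal{L}_{\omega\omega}^{-}(\tau)$ depending on a finite $\tau_{0}$, for each $n$ one finds $\mathfrak{A}_{n}\upharpoonright\tau_{0}\equiv_{n}^{-}\mathfrak{B}_{n}\upharpoonright\tau_{0}$ with $\mathfrak{A}_{n}\models\varphi$, $\mathfrak{B}_{n}\models\lnot\varphi$ (this uses only that $\varphi$ is not equivalent to any $\mathcal{L}_{\omega\omega}^{-}(\tau_{0})$-sentence of rank $n$, via \cite[Lem.~4.4]{Casa}). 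The sentence $\theta$ is the theory $\Psi$ of Theorem~\ref{thm1} encoding the sets $I_{0},\ldots,I_{n}$ of weak partial isomorphisms, indexed by a relation $<$ whose field is $U$, together with axioms (using a fresh $E$ forced to be the Leibniz congruence) saying that $<$ is a strict linear order of $U$ modulo $E$ with last element and immediate predecessors. Item (1) comes from the models $\mathfrak{A}_{n}+\mathfrak{B}_{n}'$ with $U=\{0,\ldots,n\}$; for item (2), if $U^{\mathfrak{A}^{\ast}}$ were infinite one gets an infinite descending $<$-chain, hence a full Karp system, hence (by countability and Proposition~\ref{pro1}) $\mathfrak{A}\upharpoonright\tau_{0}\sim(\mathfrak{A}\upharpoonright\tau_{0}')^{-'}$, contradicting the weak isomorphism property since the two sides disagree on $\varphi$. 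Your instinct that the Leibniz congruence must be controlled inside $\theta$ is correct and is realized by the axioms on $E$, but the engine of the proof is the encoded back-and-forth hierarchy, not a non-compactness witness.
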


\begin{proof}
Assume $\varphi\in\mathcal{L}(\tau)\setminus \mathcal{L}_{\omega\omega}^{-}(\tau)$ and $\varphi$ depends on finite
$\tau_{0}\subseteq\tau$. Let $\tau_{0}^{\prime}$ be a disjoint copy of $\tau_{0}$, and set
\[
\sigma=\tau_{0}\ \cup\ \tau_{0}^{\prime}\ \cup\ \{<,c_{0},I,G,U,E\},
\]
which results of adding to the vocabulary in the proof of Theorem~\ref{thm1} a unary
predicate symbol $U$ and a binary predicated symbol $E.$ Next, let $\rho\supseteq\alpha$ be finite and consider the sentence $\theta \in\mathcal{L}(\rho)$ which is the conjunction of the theory $\Psi$ introduced in the proof of Theorem~\ref{thm1} plus the following new sentences:

\begin{tabular}
[c]{ll}
7. & $\forall x(Ux\leftrightarrow\exists y(x<y\vee y<x)$ \ \textquotedblleft$U$
is the field of $<$" \ \ \\
8. & $\forall xExx$\\
& $\forall xy\forall\overrightarrow{w}(Exy\rightarrow(\chi(\overrightarrow
{w})\leftrightarrow\chi(\overrightarrow{w}(y/x)))\wedge Ef(\overrightarrow
{w})f(\overrightarrow{w}(y/x)))$, $\ \ \chi,f\in\rho.$\\
& This says that $E$ satisfies the finite list of axioms of equality for the\\
& vocabulary $\rho$, and guarantees that $E$ is the Leibniz congruence
relation \\
& (this is enough by \cite[\S 73 Thm.~41]{kle2})\\
& with respect to $\rho$.\\
9. & $\forall x\lnot(x<x),$ $\forall xyz(x<y\wedge y<z\rightarrow x<z),$\\
& $\forall xy(Ux\wedge Uy\rightarrow x<y\vee y<x\vee Exy)$ $,$\\
& $Uc_{0}\wedge\forall x(Ux\rightarrow x<c_{0}\vee xEc_{0}),$\\
& $\forall xy(Ux\wedge Uy\wedge x<y\rightarrow\exists z(z<y\wedge\forall
w(w<y\rightarrow w<z\vee Ewz))$\\
& These axioms say, with $E$ replacing $=:$ "$<$ \ is a strict linear order of
$U\,\ $\\
& with last element $c_{0}$ and immediate predecesor for non minimal elements"
\end{tabular}

\medskip

\noindent Using~\cite[Lem.\ 4.4 and Prop.\ 4.5]{Casa} and Lemma~\ref{use} as
in the proof of Theorem~\ref{thm1}, for each $n<\omega$, we get a model $\mathfrak{C}=\tuple{\mathfrak{A}_{n}+\mathfrak{B}_{n}^{\prime},<^{\ast},c_{0}^{\ast},I^{\ast},G^{\ast},U^{\ast},E^{\ast}}\models\theta$ where $U^{\model{A}}=\{0,\dots,n\}$, and $E^{\model{A}}$ is true equality.

All that is left to show is that if for a countably infinite structure $\model{A}$ we have $\model{A}\models\theta$, then $U^{\model{A}^{\ast}}$ is finite and non-empty. The first thing to notice is that $<^{\model{A}^{\ast}} $is a strict linear ordering with last element $[c_{0}]$ and immediate predecesors for non miminal elements, because $E$ collapses to true identity
in $\model{A}^{\ast}$. Now, suppose that $U^{\model{A}^{\ast}}$ is infinite,
then we have an infinite descending sequence

\[
\dots<^{\model{A}^{\ast}}[a_{2}]<^{\model{A}^{\ast}}[a_{1}]<^{\model{A}^{\ast
}}[a_{0}]=[c_{0}],
\]
in $U^{\model{A}^{\ast}},$ where $[a_{n+1}]$ is the immediate predecesor of
$[a_{n}].$ But then we have the sequence
\[
\dots<^{\model{A}}a_{2}<^{\model{A}}a_{1}<^{\model{A}}a_{0}\]
in $\model{A}$. Reasoning as in the proof of Theorem~\ref{thm1} (i),
$\model{A}\upharpoonright\tau_{0}\sim_{p}(\model{A}\upharpoonright\tau
_{0}^{\prime})^{-^{\prime}}$ and, since $\model{A}$ is countable,
$\model{A}\upharpoonright\tau_{0}\sim(\model{A}\upharpoonright\tau_{0}^{\prime})^{-^{\prime}}$ but $\model{A}\upharpoonright\tau_{0}\models
\varphi,(\model{A}\upharpoonright\tau_{0}^{\prime})^{-^{\prime}}\models
\lnot\varphi^{\prime}$, contradicting the weak isomorphism property.\qedhere

\end{proof}

\begin{Thm}\label{thm2}
Let $\mathcal{L}$ be an effectively regular abstract logic~\cite[Def.\ II.1.2.4]{barfer} such that  $\mathcal{L}_{\omega \omega}^{-} \leq \mathcal{L}$. Then, $\mathcal{L}$ has the weak isomorphism property, is recursively enumerable for validity, and has the L\"owenheim--Skolem property only if $\mathcal{L} \leq  \mathcal{L}_{\omega \omega}^{-}$.
\end{Thm}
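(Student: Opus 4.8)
The plan is to mimic the classical proof of Lindström's second theorem (the version using recursive enumerability of validity, e.g.\ \cite[Thm.\ III.1.1.1(2)]{barfer}), but replacing the identity-based cardinality tricks with the machinery of weak partial isomorphisms and Lemma~\ref{use} already developed for Theorem~\ref{thm1}, and using Lemma~\ref{rec} as the crucial combinatorial device. So suppose, for contradiction, that $\mathcal{L}$ extends $\mathcal{L}_{\omega\omega}^{-}$ properly. Since $\mathcal{L}$ is recursively enumerable for validity and effectively regular, it has the finite weak dependence property (one checks that the proof of Lemma~\ref{fdp} goes through once compactness is replaced by the r.e.\ enumeration of validities, or one simply invokes compactness as a consequence of the classical argument; more carefully, the proof of Lemma~\ref{fdp} uses compactness only to pass from $\Phi(\tau,R)\models\varphi\leftrightarrow\varphi'$ to the finite-subvocabulary version, and this step can be bypassed here because effective regularity already builds finite dependence into the framework). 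Hence the hypotheses of Lemma~\ref{rec} are met, giving us the finite vocabulary $\sigma\ni U$ and, for each finite $\rho\supseteq\sigma$, the sentence $\theta_{\rho}\in\mathcal{L}(\rho)$ with properties (1) and (2).

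The key idea, exactly as in the classical case, is to use $\theta_{\rho}$ to build a recursive reduction of a non-r.e.\ set to the validity set of $\mathcal{L}$, obtaining a contradiction. Concretely: I would take a finite vocabulary $\rho\supseteq\sigma$ rich enough to encode the computations of Turing machines, and for each Turing machine $e$ write a first-order (identity-free, using the congruence $E$ from Lemma~\ref{rec} in place of $=$) sentence $\mathrm{Run}_e$ asserting that $<$ (restricted to $U$) is a discrete linear order with last element $c_0$ and immediate predecessors, that $U$ codes an initial segment of a halting computation of $e$ on empty input, and that the last element $c_0$ witnesses the halting configuration. Then $\theta_{\rho}\wedge\mathrm{Run}_e$ has, by Lemma~\ref{rec}(1), arbitrarily large finite models precisely when $e$ halts, and conversely if $e$ does not halt then any model must have $U^{\mathfrak{A}}$ (equivalently $U^{\mathfrak{A}^{\ast}}$) infinite; but Lemma~\ref{rec}(2) together with the L\"owenheim--Skolem property forbids a countable model with infinite $U^{\mathfrak{A}^{\ast}}$, and by Lemma~\ref{use} any model at all can be mapped onto one of any larger size while preserving $\mathcal{L}$-theory and the structure of $U^{\ast}$ (since $U^{\ast}$ is an isomorphism invariant of the reduction). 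Chasing this through, $\theta_{\rho}\wedge\mathrm{Run}_e$ has a model if and only if $e$ halts, i.e.\ $\lnot(\theta_{\rho}\wedge\mathrm{Run}_e)$ is valid in $\mathcal{L}$ if and only if $e$ does not halt. Since validity in $\mathcal{L}$ is r.e.\ and the map $e\mapsto\lnot(\theta_{\rho}\wedge\mathrm{Run}_e)$ is recursive (here effective regularity is used: the syntactic operations of conjunction, substitution, and renaming are computable), the complement of the halting problem would be r.e., a contradiction.

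I would organise the write-up as follows. First, invoke effective regularity and recursive enumerability for validity to obtain the finite weak dependence property and hence the applicability of Lemma~\ref{rec}; fix $\sigma$, and choose $\rho\supseteq\sigma$ adding the finitely many relation and function symbols needed to describe Turing-machine configurations indexed along $<$. Second, for a machine $e$, exhibit the sentence $\mathrm{Run}_e\in\mathcal{L}_{\omega\omega}^{-}(\rho)$ (using $E$ for equality) coding ``$U$ enumerates, via $<$, the successive configurations of a terminating run of $e$ from the empty tape''. Third, prove the equivalence: $e$ halts $\iff$ $\theta_{\rho}\wedge\mathrm{Run}_e$ is satisfiable. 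The forward direction is Lemma~\ref{rec}(1) applied with $n$ the length of the halting run (plus a routine check that a finite model of $\theta_{\rho}$ with $|U|=n$ can be expanded to interpret the configuration data, which one arranges by choosing the $\mathfrak{A}_n$ in Lemma~\ref{rec} appropriately or by a further compactness step). The backward direction: if $\theta_{\rho}\wedge\mathrm{Run}_e$ has a model, apply L\"owenheim--Skolem to get a countable one, then Lemma~\ref{rec}(2) to conclude $U^{\mathfrak{A}^{\ast}}$ is finite; since $<$ on $U^{\mathfrak{A}^{\ast}}$ is then a finite discrete linear order with top $[c_0]$, $\mathrm{Run}_e$ forces this order to be (isomorphic to) the configuration sequence of a halting run, so $e$ halts. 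Fourth, conclude the reduction and derive the contradiction with recursive enumerability.

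The main obstacle I anticipate is the backward direction of the equivalence, specifically ruling out ``spurious'' non-halting models: one must be sure that a countable model of $\theta_{\rho}\wedge\mathrm{Run}_e$ cannot have $U^{\mathfrak{A}^{\ast}}$ infinite, and here the subtlety is that $\theta_{\rho}$ only controls $U^{\mathfrak{A}^{\ast}}$ (via the reduction, i.e.\ modulo the Leibniz congruence, which in the presence of $E$ coincides with $E^{\mathfrak{A}}$), not $U^{\mathfrak{A}}$ directly — but this is precisely what Lemma~\ref{rec}(2) delivers, and one has to be careful that $\mathrm{Run}_e$ is itself written using $E$ throughout so that it descends to $\mathfrak{A}^{\ast}$. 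A secondary technical point is justifying that the finite weak dependence property is available: the cleanest route is to observe that an effectively regular logic that is r.e.\ for validity is, by a standard argument (adapting \cite[Lem.\ III.1.1.2]{barfer} and Lemma~\ref{fdp} here), such that every sentence depends on a finite vocabulary — and this is exactly the hypothesis Lemma~\ref{rec} needs. Everything else is the routine but careful bookkeeping of encoding Turing machines in $\mathcal{L}_{\omega\omega}^{-}$ with a congruence symbol, which is entirely standard.
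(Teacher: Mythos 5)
Your proposal is correct in outline and reaches the same contradiction, but by a genuinely different reduction. The paper keeps the recursion theory out of the argument entirely: it invokes Vaught's identity-free generalization of Trakhtenbrot's theorem \cite{vau} to get a finite relational vocabulary $\tau'$ whose set $V_{\mathrm{fin}}$ of sentences valid on finite models is not recursively enumerable, and then proves the single equivalence $\psi\in V_{\mathrm{fin}}$ iff $\models\theta\rightarrow\psi^{U}$, using Lemma~\ref{rec}(2), the weak isomorphism property via $\mathfrak{A}\sim\mathfrak{A}^{\ast}$, and the L\"owenheim--Skolem property (or Lemma~\ref{use}) to reduce validity to validity on countably infinite structures in one direction, and Lemma~\ref{rec}(1) in the other. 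You instead re-prove the Trakhtenbrot-type fact inline, encoding halting computations in $\mathrm{Run}_{e}$ and reducing the complement of the halting problem; the pivot on Lemma~\ref{rec} is identical, and your satisfiability formulation even lets you dispense with relativizing arbitrary $\psi$. Both arguments share the same two informal spots, and you handle them at least as explicitly as the paper does: (i) the expansion step --- the paper's ``extending and expanding $\mathfrak{A}$ \dots\ in a suitable way'' is your ``routine check'' that prescribed data on $U$ can be realized inside a model of $\theta$ with $|U|=n$; this works because the only constraint $\theta$ places on the symbols of $\rho\setminus\sigma$ is the congruence axiom for $E$, which is vacuous when $E$ is true equality --- and (ii) securing the finite weak dependence hypothesis of Lemma~\ref{rec} without compactness, which the paper passes over silently and you at least flag (finite occurrence from effective regularity together with the weak isomorphism property applied to the $\tau_{0}$-reducts suffices, with no need to rerun Lemma~\ref{fdp}). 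What the paper's route buys is brevity and the outsourcing of the undecidability to \cite{vau}; what yours buys is self-containedness, at the cost of the bookkeeping of $\mathrm{Run}_{e}$ written with $E$ in place of $=$ so that, as you correctly insist, it descends to $\mathfrak{A}^{\ast}$, where $E$ collapses to true identity and the finite order on $U^{\mathfrak{A}^{\ast}}$ really is a halting run.
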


\begin{proof}
Assume for a contradiction that $\mathcal{L}\not \leq \mathcal{L}_{\omega\omega}^{-}.$ Using Vaught's generalization of Trakhtenbrot theorem to $\mathcal{L}_{\omega\omega}^{-}$ \cite{vau}, we obtain a finite purely relational vocabulary $\tau^{\prime}$ such that the set $V_{\text{fin}}\ \subseteq\mathcal{L}_{\omega\omega}^{-}(\tau^{\prime})$ of sentences valid on finite models is not recursively enumerable. Let $\theta \in\mathcal{L}_{\omega\omega}^{-}(\sigma\cup\tau^{\prime})$ where $\sigma$ and $\theta$ are given by Lemma~\ref{rec} (we may obviously assume $\sigma\cap \tau^{\prime}=\emptyset).$ Now we may observe that
\[
\psi\in V_{\text{fin}}\ \ \text{iff}\ \vDash\theta\rightarrow\psi^{U}\!,
\]
where $\psi^{U}$ is the relativization in $\mathcal{L}_{\omega\omega}^{-}$ of $\psi$ to the unary predicate $U$ (which is possible since $\mathcal{L}_{\omega\omega}^{-}$ has the relativization property). If $\psi\in V_{\text{fin}}\ $, then whenever $\model{A}$ is a countably infinite $\sigma\cup\tau^{\prime}$-structure such that $\model{A}\models\theta$ we must have that $U^{\model{A}^{\ast}}$ is finite and non-empty by Lemma~\ref{rec}, thus $\model{A}^{\ast}\models\psi^{U}$, and by the weak isomorphism property, $\model{A}\models\psi^{U}$ as desired (given that $\model{A}\sim \model{A}^{\ast}$). But for any sentence $\chi$ of $\mathcal{L}$, $\vDash\chi$ iff $\chi$ is valid on countably infinite structures: if $\not \vDash \chi$, a countably infinite countermodel for $\chi$ can be found by either applying the L\"{o}wenheim--Skolem property or Lemma~\ref{use} as needed.\footnote{This point is different from the proof of the classical counterpart of the theorem, where equality is available. Obviously, in that setting, from a finite countermodel we cannot simply go to a countably infinite one.} On the other hand, if $\vDash\theta\rightarrow\psi^{U}$ and $\model{A}$ is a $\tau^{\prime}$-model of size $n$, say, we may assume (since $\tau^{\prime}\cap \sigma=\emptyset$) that $\model{A}\cong(\model{A}^{\prime}|U)\upharpoonright \tau^{\prime}$ for a model $\model{A}^{\prime}$ that comes from extending and expanding $\mathfrak{A}$ to a $\tau^{\prime}\cup\sigma$-model of $\theta$ given by (1) in a suitable way. Hence, $\model{A}^{\prime}\models\psi^{U}$ and thus $\model{A}\models\psi.$ Since, by hypothesis, $\mathcal{L}$ is effectively regular and recursively enumerable for validity, we must have then that $V_{\text{fin}}$ is recursively enumerable after all, which is a contradiction.\qedhere

\end{proof}

\begin{Rmk}\emph{
Proper extensions of  $\mathcal{L}_{\omega \omega}^-$ which are recursively enumerable for validity and have the weak isomorphism property are given in Examples~\ref{ex2} and~\ref{ex3} below. Notice that an analogous theorem for the monadic case is trivial because, in the presence of the weak isomorphism property, the effectivity of the logic implies the finite weak dependence property.}
\end{Rmk}

\begin{Rmk}\emph{Other maximality results can be obtained by similar methods to those in this paper. For example, $\mathcal{L}_{\omega \omega}^{-}$ is the maximal logic with the weak isomorphism property, compactness and the so called {\em Tarski union property}. This can be seen by adapting the argument of~\cite[Thm.\ III.2.2.1]{barfer} for $\mathcal{L}_{\omega \omega}$ to the context without identity with the help of~\cite[Prop.\ 2.8]{de}. We conjecture that the $\lambda$-omitting types theorem also provides a characterization of the maximality of $\mathcal{L}_{\omega \omega}^{-}$ (cf.\ \cite{lin2}).}

\end{Rmk}

\section{Extensions of $\mathcal{L}_{\omega \omega}^-$}\label{ext}

In this section, we collect a number of interesting examples of identity-free logics that help answer some  questions posed by our results, e.g.\ \emph{is there a proper extension of $\mathcal{L}_{\omega \omega}^-$  satisfying both the compactness and weak isomorphism properties?}\footnote{The positive answer to this question in Example~\ref{ex3} shows that the L\"owenheim--Skolem property is necessary in Theorem~\ref{thm1}.} Notice that the infinitary logic $\mathcal{L}_{\omega_1\omega}^-$ is an example of an abstract logic with the weak isomorphism and L\"owenheim--Skolem properties, but without compactness.

Our examples will rely on the addition of suitable Lindstr\"om quantifiers which conveniently differ from usual definitions found in the literature. Indeed, adding a Lindstr\"om quantifier to $\mathcal{L}_{\omega \omega}^-$ usually destroys the weak isomorphism property, as is the case with cardinality and cofinality quantifiers. However, each quantifier has a natural  version closed under weak isomorphisms.

\begin{Exm}[The logic $\mathcal{L}_{\omega \omega}^-(Q_{\alpha}^-)$] \label{ex2}\emph{ Consider the Lindstr\"om quantifier $Q_\alpha^-$ defined as: 
\[
 \{ \tuple{A, M, E} \mid M \subseteq A, E  \ \text{equivalence relation on} \ A \ \text{congruent with} \ M,
 \big|\faktor{M}{E}\big|\geq \omega_\alpha  \}.
\]
The satisfaction condition for this operator then is 
\[
\model{A} \models Q_\alpha^- xyz[\f(x), \theta(y,z)]  \ \text{iff} \ \{ \tuple{a, b}
   \in A^2 \mid \model{A} \models \theta[a, b]
   \} \ \text{is an equivalence relation on} \ A,  
\]
\[
\,\,\,\,\,\,\, \,\,\,\,\,\,\,    \,\,\,\,\,\,\,  \,\,\,\,\,\,\,  \model{A} \models \forall xy (\theta(x,y) \rightarrow (\f(x) \rightarrow \f(y))), \ \text{and}
\]
\[
\,\,\,\,\,\,\,  \,\,\,  \,\,\,\,\,\,\, \,\,\,\,\,\,\,  \,\,\,\,\,\,\, \,\,\,\,\,\,\,   \,\,\,\,\,\,\, \,\,\,\,\,\,\,  \,\,\,\,\,\,\,  \,\,\,\,\,\,\,  \,\,\,\,\,\,\,     \
 \big|\faktor{\{a \in A\mid \model{A} \models \f[a] \} }{
 \{ \tuple{a, b}
   \in A^2 \mid \model{A} \models \theta[a, b]
   \}}\big| \geq \omega_\alpha.
\] 
The quantifier $Q_\alpha$ may be recovered by letting $E$ be the real identity relation $=$. 
 }
\end{Exm} 

The first observation we wish to make is that  $Q_1^- $ (seen as a Lindstr\"om quantifier) is closed under weak isomorphisms, i.e.\ if $\tuple{A, M, E}\in Q_1^- $ and $\tuple{A, M, E} \sim \tuple{A', M', E'}$, then  $\tuple{A', M', E'}\in Q_1^- $. To see this, suppose  that $\tuple{A, M, E}\in Q_1^- $ and $R$ is a weak isomorphism from $\tuple{A, M, E}$  onto $\tuple{A', M', E'}$.  $E'$ is an equivalence relation on $A'$ compatible with $M'$ because that fact can be expressed as a formula in $\mathcal{L}_{\omega \omega}^-$. We wish to show then that $R$ induces a bijection $\faktor{M}{E} \longrightarrow \faktor{M'}{E'}$. Consider the relation $R'$ defined as $[x]R'[y]$ iff $xRy$. We wish to show that $R'$ is in fact a bijection. It is obviously surjective since $R$ is. For functionality: assume that $x\in M$, $xRy_1$ and $xRy_2$, then, since $xEx$, we must have that $y_1E'y_2$, which then means that  if  $[x]R'[y_1]$ and  $[x]R'[y_2]$, $[y_1]=[y_2]$.  Injectivity is obtained by an analogous argument in reverse. Hence,  $| \faktor{M'}{E'}| \geq \omega_1$ as desired. 

$\mathcal{L}_{\omega \omega}^-(Q_1^- )$  is clearly more expressive than $\mathcal{L}_{\omega \omega}^-$ since the latter has the L\"owenheim--Skolem property but the former does not (thus, the quantifier $Q_1^-$ is not definable in $\mathcal{L}_{\omega \omega}^-$).
 Recall that a logic $\mathcal{L}$ is said to be \emph{congruence closed} \cite{ma} if, for any $\f \in \mathcal{L}(\tau)$, there is a sentence $\f_E \in \mathcal{L}(\tau \cup \{E\})$ (where $E$ is a new binary predicate)
 such that 
 \[
 (*) \,\,\,\, \faktor{\mathfrak{A}}{\overline{E}} \models \f \ \text{iff} \  \tuple{\mathfrak{A}, \overline{E}} \models \f_E
 \]
 for any structure $\mathfrak{A}$ and any equivalence relation $\overline{E}$ on $A$. We will follow the notation of~\cite{Cai2} in using $q\mathcal{L}$ to denote the congruence closure of a given logic $\mathcal{L}$, obtained by adjoining to $\mathcal{L}$ the sentences defined by $(*)$ as new quantifiers (see \cite{ma}). Then it is not difficult to observe that the logic $\mathcal{L}_{\omega \omega}^-(Q_1^- )$ is contained in the logic (with identity)  $q \mathcal{L}_{\omega \omega}(Q_1)$. By the definition above, $$ \big{|}\faktor{\{a \in A\mid \model{A} \models \f[a] \} }{
 \{ \tuple{a, b}
   \in A^2 \mid \model{A} \models \theta[a, b]
   \}}\big{|} \geq \omega_1$$ can be expressed by the relativized sentence $((Q_1x(x=x))_{\theta})^{\{x \mid \f(x)\}}$. Recall a logic is $(\kappa, \lambda)$-compact if  every set of sentences of cardinality $\leq \kappa$ which has models for each of its subsets of cardinality $<\lambda$, has itself a model.  By \cite[Prop.\ 3.2]{ma}, for any $\mathcal{L}$, if $\mathcal{L}$ is $(\kappa, \lambda)$-compact, so is $q\mathcal{L}$, and hence $q \mathcal{L}_{\omega \omega}(Q_1)$ is $(\omega, \omega)$-compact since $\mathcal{L}_{\omega \omega}(Q_1)$ is, which means that $\mathcal{L}_{\omega \omega}^-(Q_1^- )$  also inherits this property. Once more, by \cite[Prop.\ 3.2]{ma}, since $\mathcal{L}_{\omega \omega}(Q_1)$ is recursively enumerable for validity, $q\mathcal{L}_{\omega \omega}(Q_1)$ is too, and hence, so is the logic $\mathcal{L}_{\omega \omega}^-(Q_1^- )$.

\begin{Exm}[The logic $\mathcal{L}_{\omega \omega}^-(Q^{\text{cf}\omega -})$]\label{ex3} \em

Consider now the following Lindstr\"om quantifier: 
\[
Q^{\text{cf}\omega-} = \{ \tuple{A, M, E} \mid M \subseteq A^2, E  \ \text{is an equivalence relation on} \ A \ \text{congruent with} \ M, 
\]
\[
\faktor{\tuple{A, M}}{E} \ \ \text{is a linear order with cofinality} \ \omega  \}.
\] 

Then, we have that $\model{A} \models  Q^{\text{cf}\omega-} xyzw[\f(x,y), \theta(z, w)]$ iff

\begin{itemize}
\item $\theta^\mathfrak{A}=\{ \tuple{a, b} \in A^2 \mid \model{A} \models \theta[a, b]\} \ \text{is an equivalence relation on} \ A,$

\item $\model{A} \models \forall xy ((\theta(x,y) \wedge \theta(z,w)) \rightarrow (\f(x,z) \rightarrow \f(y, w))),$

\item $\model{A} \models ``\f(x,y) \ \text{is an irreflexive transitive relation}",$

\item $\model{A} \models \All{xy} (\f(x,y) \vee \f(y,x) \vee \theta(x,y)),$ and

\item $\faktor{\tuple{A, \theta^{\mathfrak{A}}} }{ \{ \tuple{a, b} \in A^2 \mid \model{A} \models \theta[a, b] \}} \ \text{has cofinality} \ \omega.$
\end{itemize}
Once more, the quantifier $Q^{\text{cf}\omega}$ can be defined as above by letting $E$ be the true identity relation $=$.

We can show that the quantifier  $Q^{\text{cf}\omega-}$ is closed under weak isomorphisms. Suppose  that $\tuple{A, M, E}\in Q^{\text{cf}\omega-}$  and $R$ is a weak isomorphism from $\tuple{A, M, E}$  onto $\tuple{A', M', E'}$. As in Example~\ref{ex2}, $R'$ defined as $[x]R'[y]$ iff $xRy$ gives a bijection from $\faktor{\tuple{A, M}}{E}$ to $\faktor{\tuple{A', M'}}{E'}$. Furthermore, $R'$ preserves the order: assume that $[x_1]R'[y_1]$, $[x_2]R'[y_2]$ and $\tuple{[x_1], [x_2]} \in M^{\faktor{\tuple{A, M}}{E}}$, so $\tuple{x_1, x_2} \in M$ and, since $x_1Ry_1$ and $x_2Ry_2$, we have $\tuple{y_1, y_2} \in M'$, and thus $\tuple{[y_1], [y_2]} \in M^{'\faktor{\tuple{A', M'}}{E'}}$. Hence, the cofinality of $M^{'\faktor{\tuple{A', M'}}{E'}}$ must be $\omega$ as well.

Shelah's logic $\mathcal{L}_{\omega \omega}(Q^{\text{cf}\omega})$ is the logic $(\infty, \omega)$-compact\footnote{A nice detailed proof can be found in~\cite{Casa3}.} and, by~\cite[Prop.\ 3.2]{ma}, so is $q\mathcal{L}_{\omega \omega}(Q^{\text{cf}\omega})$. But, given that $\mathcal{L}_{\omega \omega}^-(Q^{\text{cf}\omega -})$ is included in $q\mathcal{L}_{\omega \omega}(Q^{\text{cf}\omega})$, the former is also $(\infty, \omega)$-compact. Similarly, $\mathcal{L}_{\omega \omega}^-(Q^{\text{cf}\omega -})$ is recursively enumerable for validity. Moreover, we can observe that $\mathcal{L}_{\omega \omega}^-(Q^{\text{cf}\omega -})$  does not have a L\"owenheim--Skolem theorem. For example, the sentence in the signature $\{ E, <\}$ with two binary relation symbols,

\begin{itemize}
\item[] $\neg Q^{\text{cf}\omega-} xyzw[x<y, E(z, w)]$
\item[] $\wedge \ ``E \ \text{is an equivalence relation}"$

\item[] $\wedge \ \forall xy ((E(x,y) \wedge E(z,w)) \rightarrow (x<z \rightarrow y<w))$

\item[] $\wedge \  ``< \ \text{is an irreflexive transitive relation}"$

\item[] $\wedge \ \All{xy} (x <y \vee y<x \vee E(x,y))$ 
\item[] $\wedge \ \All{x} \Exi{y}(x<y)$

\end{itemize}
has no countable models since it produces in the quotient model an infinite linear order without last element with cofinality $\neq \omega$, and hence $\geq \omega_1$.
\end{Exm} 

Interestingly enough, some known quantifiers can be shown to preserve the weak isomorphism property:

\begin{Exm}[The logic $\mathcal{L}_{\omega \omega}^-(Q^H)$]\emph{Recall the Henkin quantifier $Q^H$ which is defined as follows: 
\[
Q^H = \{\tuple{A, M} \mid M \subseteq A^4, M \supseteq f \times g \ \text{for some} \ f,g \colon A \longrightarrow A\}.
 \]
Then, we have that $\model{A} \models Q^{H} xyzw \f(x,y, z, w)$ iff for some $f,g \colon A \longrightarrow A$ and for each $a,b \in A$, $\model{A} \models \f[a,f(a), b, g(b)]$ iff $\model{A} \models \Exi{f,g}\All{x,y} \f[x,f(x), y, g(y)]$.
}
\end{Exm}

First, we must show that $Q^H$ is closed under weak isomorphisms. Assume then  that $\tuple{A, M, E}\in Q^H$  and $R$ is a weak isomorphism from $\tuple{A, M}$  onto $\tuple{A', M'}$. Then $M \subseteq A^4$, $M \supseteq f \times g$ for some $f, g \colon A \longrightarrow A$. All we need to do now is define $f', g' \colon A' \longrightarrow A'$ such that $M' \supseteq f' \times g'$. Define $f'$ as follows: take any $a_1 \in A'$, we  know  then that $Ra_0a_1$ for some $a_0\in A$, so  let $f'(a_1)$ be some $b_1\in A'$ such that $Rf(a_0)b_1$. Do a similar thing for $g'$. Now, for any $\tuple{a_1, f'(a_1), b_1, g'(b_1)} \in f' \times g'$, there are $a_0, b_0 \in A$ s.t.\ $Ra_0a_1, Rf(a_0)f'(a_1), Rb_0b_1, Rg(b_0)g'(b_1)$, and since $R$ is a weak isomorphism and $\tuple{a_0, f(a_0), b_0, g(b_0)} \in M$ by hypothesis, $\tuple{a_1, f'(a_1), b_1, g'(b_1)} \in M'$, as desired.

Take now the sentence $\f_{\text{inf}} \in \mathcal{L}_{\omega \omega}^-(Q^H)(\tau)$ where $\tau = \{E\}$ and $E$ is binary: 
\[
``E \ \text{is an equivalence relation}"  \wedge  \Exi{z}\Exi{f,g}\All{x,y} (\neg z E f(x) \wedge (f(x) E y \rightarrow g(y) E x))
\]
 Since $Q^H $ is closed under weak isomorphisms, $   \model{A} \sim  \model{A}^* = \faktor{\model{A}}{E^\model{A}}$ in the vocabulary $\tau$, and $\faktor{\model{A}}{E^\model{A}}\models \All{x,y} (xEy \leftrightarrow x=y)$, we have that $\mathfrak{A}\models  \f_{\text{inf}}$ only if $\faktor{\model{A}}{E^\model{A}}\models  \Exi{z}\Exi{f,g}\All{x,y} (z \neq f(x) \wedge (f(x) = y \rightarrow g(y) = x))$. The latter sentence says that $\faktor{\model{A}}{E^\model{A}}$ is infinite.
On the other hand, for a $\tau$-structure $\model{A}$, if $\model{A}\models ``E \ \text{is an equivalence relation}"$   and $\faktor{\model{A}}{E^\model{A}} = \model{A}^*$ is infinite, $\faktor{\model{A}}{E^\model{A}}\models  \Exi{z}\Exi{f,g}\All{x,y} (z \neq f(x) \wedge (f(x) = y \rightarrow g(y) = x))$, so, reversing the previous reasoning, $\model{A} \models \f_{\text{inf}}$. 

Hence, we might consider the following theory $T$ in the vocabulary $\tau$:
\[
\{\neg \f_{\text{inf}}\} \cup \{\Exi{x_0, \dots, x_n}\bigwedge_{i<j \leq n} \neg x_i E x_j \mid  1 \leq n < \omega\} \cup \{``E \ \text{is an equivalence relation}"\}
\]
This theory says that $E$ is an equivalence relation with infinitely many equivalence classes, so for any model $\model{A}\models T$,  $\faktor{\model{A}}{E^\model{A}}$ is infinite and then $\faktor{\model{A}}{E^\model{A}}\models \Exi{z}\Exi{f,g}\All{x,y} (z \neq f(x) \wedge (f(x) = y \rightarrow g(y) = x)$, which is impossible, since $\model{A}\models  \neg \f_{\text{inf}}$. Hence, $T$ has no models. However, $T$ is finitely satisfiable. Thus, compactness fails for the logic $\mathcal{L}_{\omega \omega}^-(Q^H)$, which is then obviously a proper extension of $\mathcal{L}_{\omega \omega}^-$.

To see that $\mathcal{L}_{\omega \omega}^-(Q^H)$ does not have the L\"owenheim--Skolem property consider first the formula $\theta(x,y)$ in the vocabulary $\{E, <\}$:
\begin{itemize}
\item[] $ ``E \ \text{is an equivalence relation congruent with $<$}" $
\item[] $   \exists f, g \forall u, v ((E (u, v) \leftrightarrow E(f(u), g(v))) \wedge (u<x \rightarrow f(v) < y)) $

\item[] $\wedge \  \exists f, g \forall u, v ((E (u, v) \leftrightarrow E(f(u), g(v))) \wedge (u<y \rightarrow f(v) < x))$

\end{itemize}
Now, if $\model{A} \models \theta[a,b]$, since $\model{A}\sim \faktor{\model{A}}{E^\model{A}} $, and given  that $\faktor{\model{A}}{E^\model{A}}\models \All{x,y} (xEy \leftrightarrow x=y)$,

\begin{itemize}
\item[] $\faktor{\model{A}}{E^\model{A}}\models  \exists f, g \forall u, v ((u = v \leftrightarrow f(u) = g(v)) \wedge (u<[a]_E \rightarrow f(u) < [b]_E))$
\item[] $\faktor{\model{A}}{E^\model{A}}\models\exists f, g \forall u, v (( u = v \leftrightarrow f(u) = g(v)) \wedge (u<[b]_E \rightarrow f(u) < [a]_E))$
\end{itemize}
This implies  that  $| \{  z \mid   \faktor{\model{A}}{E^\model{A}}\models z < [a]_E \}| = | \{  z \mid   \faktor{\model{A}}{E^\model{A}}\models z < [b]_E \}| $. Hence, $\theta(x,y)$ is an instance of a H\"artig quantifier in the quotient by $E$.  We can then use this methodology to adapt the typical counterexample for the L\"owenheim--Skolem property for the H\"artig quantifier \cite[Sentence (1.2)]{herr}, axiomatizing infinite linear orderings of successor cardinalities.

\begin{table}[]
\centering
\begin{tabular}{|l|l|l|l|}
\hline
                           Logic    & Compactness & L\"owSko Property & Weak Iso Property            \\ \hline
$\mathcal{L}_{\omega\omega}$ &     $+$       &              $+$                  &         $-$           \\
$\mathcal{L}_{\omega\omega}^-$ &    $+$    &           $+$                    &        $+$                                \\
$\mathcal{L}_{\omega\omega}^-(\{\exists^{\geq n} \mid n\in X \})$ &      $+$        &             $+$                &           $-$                  \\
$\mathcal{L}_{\omega \omega}^-(Q_1^-)$ &    $+$ \ (at least $(\omega, \omega)$)         &          $-$                       &             $+$                  \\
$\mathcal{L}_{\omega \omega}^-(Q_1)$ &        $+$ \ (at least $(\omega, \omega)$)         &            $-$              &        $-$                       \\
$\mathcal{L}_{\omega \omega}^-(Q^{\text{cf}\omega -})$ &        $+$       &           $-$         &                $+$          \\
$\mathcal{L}_{\omega \omega}^-(Q^{\text{cf}\omega })$ &       $+$          &           $-$             &         $-$              \\
$\mathcal{L}_{\omega \omega}^-(Q^H)$ &       $-$         &              $-$       &               $+$              \\ 
$\mathcal{L}_{\omega_1\omega}^-$ &       $-$         &              $+$       &               $+$              \\ 
$\mathcal{L}_{\infty \omega}^-$ &       $-$         &              $-$       &               $+$              \\ \hline
\end{tabular}\vspace{0.5cm}
\caption{Summary of properties of some logics.}
\end{table}

\section{Conclusions}\label{con}

Our work still leaves a number of interesting open questions, including:

\begin{Prob}\label{pmon}
Is there a proper extension of $\mathcal{L}_{\omega \omega}^-$ satisfying both the L\"owenheim--Skolem and compactness properties that is not contained in $\mathcal{L}_{\omega \omega}$?
\end{Prob}

\begin{Prob}\label{prob2}
Is there a compact extension of $\mathcal{L}_{\omega \omega}^-$ which does not remain compact when adding identity to the logic?
\end{Prob}

 \section*{Acknowledgments}
 
 We are grateful to various people who offered useful comments that helped to improve the presentation of the paper, particularly Grigory Olkhovikov and Lloyd Humberstone. Badia was partially supported by the Australian Research Council grant DE220100544. Badia and Noguera were also  supported by the European Union's Marie Sklodowska--Curie grant no.\ 101007627 (MOSAIC project).

\end{document}